\newcommand{\be}{\begin{equation}}
\newcommand{\ee}{\end{equation}}
\newcommand{\dalign}[1]{\[\begin{aligned} #1 \end{aligned}\]}
\newcommand{\nearint}[1]{\left\llbracket #1 \right\rrbracket}
\newcommand{\borels}[2]{{\bf\Sigma}^{#1}_{#2}}
\newcommand{\borelp}[2]{{\bf\Pi}^{#1}_{#2}}
\newcommand{\boreld}[2]{{\bf\Delta}^{#1}_{#2}}
\newcommand{\squig}[1]{\,\mathop\rightsquigarrow\limits^{#1}\,}
\newcommand{\RQ}{{\R\!\setminus\!\Q}}
\def\ttt{{\tt t}}
\def\gell{{
	{\frak L}
}}
\def\gB{{
	{\frak B}
}}
\def\gb{{
	{\frak b}
}}
\title[Descriptive properties of the type of an irrational number]
{Descriptive properties of\\ the type of an irrational number}
\author[W.~Banks]{William Banks}
\address{University of Missouri, Columbia, USA.}
\email{bankswd@missouri.edu}
\author[A.~Harcharras]{Asma Harcharras}
\address{University of Missouri, Columbia, USA.}
\email{harcharrasa@missouri.edu}
\author[D.~Lecomte]{Dominique Lecomte}
\address{Sorbonne Universit\'e,
Institut de Math\'ematiques de Jussieu-Paris Rive Gauche, 
Paris, France and Universit\'e de Picardie, IUT de l'Oise,
Creil, France.}
\email{dominique.lecomte@upmc.fr}
\date{\today}
\begin{document}

\begin{abstract}
The type $\tau(\alpha)$ of an irrational number $\alpha$
measures the extent to which rational numbers can closely approximate $\alpha$.
 More precisely, $\tau(\alpha)$ is the
infimum over those $t\in\R$ for which $|\alpha-h/k|<k^{-t-1}$
has at most finitely many solutions $h,k\in\Z$, $k>0$.
In this paper, we regard the type as a function $\tau:\RQ\to[1,\infty]$
and explore its descriptive properties. We show that $\tau$ is invariant under
the natural action of $GL_2(\Q)$ on $\RQ$.
We show that $\tau$ is densely onto, and we compute the descriptive
complexity of the pre-image of the singletons and of certain intervals.
Finally, we show that the function $\tau$ is $[1,\infty ]$-upper
semi-Baire class 1 complete.
\end{abstract}

\thanks{MSC Primary: 03E15, 11J82; Secondary: 26A21, 11J70.}

\thanks{
\textbf{Keywords:} Baire hierarchy, Borel hierarchy, complete,
continued fraction, irrationality type, irrationality measure.}

\maketitle

\newpage

\tableofcontents


\newpage{\large\section{Introduction}}

\emph{Descriptive set theory} is a branch of mathematical logic that revolves around
the study of ``definable sets'' in Polish (i.e., separable, completely metrizable)
spaces.  In this theory, well-behaved sets are classified in
hierarchies  arranged according to the complexity of their
definitions, and the structure of the sets at each level in the hierarchy
is methodically analyzed. Descriptive set theory is
a primary area of research in set theory and has
applications in other areas of mathematics, including ergodic theory, functional
analysis, and the study of operator algebras and group actions.

\emph{Number theory} (or higher arithmetic) is a branch of pure mathematics
that is devoted to the study of prime numbers, integers, integer-valued functions, 
and mathematical objects constructed from the integers (for example, the
set $\Q$ of rational numbers). One also studies real numbers in relation to rational numbers, for example, as approximated by the latter; this is Diophantine approximation.
Although many problems in number theory can be approached using
analytic or algebraic techniques, such methods do not lend themselves
well to the study of irrational numbers.
Transcendental number theory bears little resemblance
to other branches of number theory, although it is
an indispensable part of the field.

The present paper is a rare example in the literature of results that
bridge the two branches of mathematics described above (another example is
the excellent paper of Jackson et al~\cite{Jackson},
which contains an application
of descriptive set theory to number theory).
Our work originates in the observation that \emph{the set
of irrational real numbers $\RQ$ is a Polish space} (in the usual topology),
and hence various distinguished subsets of $\RQ$
are of simultaneous interest in both disciplines.
The initial aim of this paper was to
combine techniques from both descriptive set theory and number theory
to determine precisely the descriptive complexity of subsets of $\RQ$ consisting of
all irrational numbers of a specified \emph{irrationality type}
(see the definition in \S\ref{sec:irrtypdef}).
Later on, as our tools developed, our goals became more ambitious. In the end,
we have shown (among other things and in a sense to be made precise below) that the
function afforded by irrationality type has the highest descriptive complexity
in its class; it is likely the first concrete example of a \emph{Baire complete}
function for the natural class of functions in which it resides.

Our methods are fairly general and can be applied to other Polish
spaces. To formulate our results, we first recall some standard terminology.\footnote{Due
to the interdisciplinary nature of this work, we have included many
basic definitions throughout the paper to keep the exposition fairly
self-contained and accessible to readers from both areas of mathematics.}

\subsection{The type of an irrational number}
\label{sec:irrtypdef}

For a real number $x$, we write $\nearint{x}$
to denote the distance from $x$ to the nearest integer:
$$
\nearint{x}\defeq\min\limits_{n\in\Z}|x-n|.
$$
For any irrational number $\alpha$, the \emph{type} of $\alpha$
is the quantity defined by\footnote{This definition of the type is equivalent
to the definition given in our abstract; proof of the equivalence is
given in the appendix.}
\be\label{eq:tau-defn}
\tau(\alpha)\defeq\sup\big\{\theta\in\R:
\mathop{\underline{\rm lim}}\limits_{q\in\N}
~q^\theta\nearint{q\alpha}=0\big\}.
\ee
The Dirichlet approximation theorem\footnote{Dirichlet's approximation
theorem asserts that for any $\alpha,Q\in\R$, $Q\ge 1$, there is a rational
number $p/q$ with $1\le q\le Q$ such that $|\alpha-p/q|<1/(qQ)$;
see, e.g., Bugeaud~\cite[Thm~1.1]{Bugeaud}. From this, it follows
that for any irrational $\alpha$
one has $q\nearint{q\alpha}<1$ for infinitely many $q\in\N$.}
implies that $\tau(\alpha)\in[1,\infty]$. One says that $\alpha$ is
of \emph{finite type} if $\tau(\alpha)<\infty$.
The celebrated theorems of Khinchin~\cite{Khin1} and of 
Roth~\cite{Roth1, Roth2} assert that $\tau(\alpha)=1$ for almost
all real numbers (in the sense of the Lebesgue measure) and all
irrational algebraic numbers, respectively.

A \emph{Liouville number} is an irrational number $\alpha$ with the
property that for each positive integer~$n$ there exist integers $p$
and $q>1$ such that
\be\label{eq:liouville-ineq}
\bigg|\alpha-\frac{p}{q}\bigg|<\frac{1}{q^n}.
\ee
Using \eqref{eq:liouville-ineq} it is easy to show
that $\alpha$ is a Liouville number if and only if $\tau(\alpha)=\infty$.

We remark that the quantity $\mu(\alpha)\defeq\tau(\alpha)+1$
is called the \emph{irrationality measure} (or the \emph{Liouville-Roth constant})
associated with $\alpha$, and very often it occurs in the literature instead of
$\tau(\alpha)$; this is largely a matter of taste.

\subsection{Borel hierarchy and completeness of sets}
\label{sec:borelhierarchy}
Let $\cX$ be a Polish space, i.e., a separable and completely metrizable
topological space. The \emph{Borel hierarchy} on $\cX$ consists of 
sets $\borels0\xi (\cX)$, $\borelp0\xi (\cX)$, and $\boreld0\xi (\cX)$,
which are defined for every countable ordinal $\xi>0$. The elements of these sets are all
subsets of $\cX$. The space $\cX$ is not mentioned when there is no ambiguity
or when $\cX$ is considered as a variable (in the latter case, for instance,
a set $A$ is in the \emph{class} $\borels0\xi$ if there is a Polish space $\cX$
such that $A$ belongs to the \emph{set} $\borels0\xi (\cX)$).
When $\cX$ is fixed, these sets are defined inductively according to the following rules:
\begin{itemize}
\item[$\bullet$] $A\in\borels01$ if and only if $A$ is open in $\cX$;
\item[$\bullet$] $A\in\borelp0\xi$ if and only if
$\cX\setminus A\in\borels0\xi$;
\item[$\bullet$] For $\xi>1$, $A\in\borels0\xi$ if and only if
$A$ is the union of some countable collection
$\{A_j:j\in\N\}$ such that each $A_j$ lies in $\borelp0{\xi_j}$
for some $\xi_j<\xi$;
\item[$\bullet$] $A\in\boreld0\xi$ if and only if 
$A\in\borels0\xi$ and $A\in\borelp0\xi$.
\end{itemize}
The class of Borel sets ramifies in the following hierarchy:
\begin{alignat*}{50}
&&\borels01&&&&
\borels02&&&&
\borels03&&&&
\borels04&~~\cdots&&&
\borels0\xi&~~\cdots\\
&\nearrow&&\searrow&&\nearrow&&\searrow&
&\nearrow&&\searrow&&\nearrow&&  &&\nearrow&\\
\boreld01&&&&
\boreld02&&&&
\boreld03&&&&
\boreld04&~~\cdots&&&
\boreld0\xi&~~\cdots&&&\\
&\searrow&&\nearrow&&\searrow&&\nearrow&
&\searrow&&\nearrow&&\searrow&&  &&\searrow&\\
&&\borelp01&&&&
\borelp02&&&&
\borelp03&&&&
\borelp04&~~\cdots&&&
\borelp0\xi&~~\cdots
\end{alignat*}
and every class is contained in any class to the right of it.
If $\cX$ is an uncountable Polish space, then one has
$\borels0\xi\ne\borelp0\xi$ for every countable ordinal $\xi>0$;
for a proof, see Kechris~\cite[Thm.~22.4]{K}.

Note that $\RQ$ is Polish since it is a $G_\delta$ subspace
of the Polish space~$\R$ (see \cite[Thm.~3.11]{K}). A subset of a topological
space $\cX$ is \emph{clopen} if it is closed and open, and $\cX$ is \emph{zero-dimensional} if it has a basis consisting of clopen sets. The space $\RQ$ 
is zero-dimensional since it is the complement of~$\Q$, which
is dense in $\R$. Moreover, $\RQ$ is uncountable, hence by \cite[Thm.~22.4]{K} we have
$$
\borels0\xi(\RQ)\ne\borelp0\xi(\RQ)
$$
for every countable ordinal $\xi>0$.

If $\cX$ and $\cY$ are topological spaces, $A\subset \cX$, and
$B\subset \cY$, then $A$ is said to be
\emph{Wadge reducible to $B$} if there is a continuous map $f:\cX\to \cY$
with $f^{-1}(B)=A$ (that is, $x\in A\Longleftrightarrow f(x)\in B$);
in this situation, we write $A\le_W B$. Wadge reducibility
provides a notion of the relative complexity of sets in topological
spaces, where the notation $A\le_W B$ indicates that $A$ is
``simpler'' than $B$ in a suitable sense. The relation $\le_W$
is reflexive and transitive; it imposes an
(essentially \text{well-ordered} in zero-dimensional spaces) hierarchy on the Borel sets,
called the \emph{Wadge hierarchy}.
The classes $\borels0\xi$ and $\borelp0\xi$ are closed under
continuous pre-images, and these classes are
initial segments in the Wadge hierarchy.

Let $\bf\Gamma$ be a class of sets defined on Polish spaces.
If $\cY$ is a Polish space, a subset $B\subset\cY$
is called \emph{$\bf\Gamma$-complete} if $B\in {\bf\Gamma}(\cY)$
and $A\le_W B$ for any $A\in {\bf\Gamma}(\cX)$, where 
$\cX$ is any arbitrary zero-dimensional Polish space; see \cite[Def.~22.9]{K}.
In particular, if $\cY$ is a zero-dimensional Polish space, then by
a theorem of Wadge, one knows that $B\subset\cY$ is
$\borels0\xi$-complete [resp.\ \text{$\borelp0\xi$-complete}]
if and only if $B$ belongs to $\borels0\xi\setminus\borelp0\xi$
[resp.\ $\borelp0\xi\setminus\borels0\xi$]; see, for example,
\cite[Thm.~22.10]{K}. In other words, relative to the 
Wadge ordering $\le_W$, the sets in
$\borels0\xi\setminus\borelp0\xi$ are maximal among all
$\borels0\xi$ sets
(and similarly switching $\borels0\xi$ and $\borelp0\xi$).
We remark that the hypothesis that $\cX$ is zero-dimensional
guarantees the existence of ``sufficiently many'' continuous functions
(by contrast, the only continuous functions from
$\R$ into the Cantor space $\{0,1\}^\omega$ are the constant 
functions). On the other hand, it is unnecessary to assume that $\cY$ is
zero-dimensional in Wadge's theorem, for it holds in any uncountable
Polish space; see \cite[24.20]{K}.

\subsection{Baire hierarchy and completeness of functions}
There is also a hierarchy of functions between Polish spaces called the
\emph{Baire hierarchy}. Recall that a function is continuous if the pre-image
of any open set is open, i.e., in $\borels{0}{1}$. A function is called
\emph{Baire class~1} if the pre-image of any open set is in $\borels{0}{2}$. More
generally, in the light of \cite[24.1 and 24.3]{K}, a function is called
\emph{Baire class $\xi$} if the pre-image of any open set is in $\borels{0}{\xi+1}$, 
for any countable ordinal $\xi$. Thus, the Baire class 0 functions are 
the continuous ones.

Recall that a function $f$ from a Polish space $\cX$ into 
$\overline\R\defeq\R\cup\{ -\infty ,\infty\}$ is said to be \emph{upper semi-
continuous} (resp., \emph{lower semi-continuous}) if $f^{-1}([-\infty ,r))$ (resp., 
$f^{-1}((r,\infty ])$) is an open subset of $\cX$ for each real number $r$. Considering 
the usual order topology on $\overline\R$, which is homeomorphic to $[-1,1]$ and thus 
Polish, we see that the semi-continuous functions are (relatively simple) Baire class~1 
functions. We naturally extend these notions to countable ordinals by saying that a 
function $f$ from a Polish space $\cX$ into a subset 
$K\subset\overline\R$ is \emph{upper semi-Baire class $\xi$}
[resp.\ \emph{lower semi-Baire class $\xi$}] if 
$f^{-1}([-\infty ,r))$ \big[resp.\ $f^{-1}((r,\infty ])$\big] is a $\borels{0}{\xi +1}$ 
subset of $\cX$ for every real number $r$. 
As we show in Lemma \ref{eq:norah-jones}, there is an equivalent definition of the type
of an irrational number in terms of limit superior.
According to Elekes et al~\cite[Theorem 2.1]{Elekes}, 
functions defined in terms of limit superior
are deeply connected with the upper-semi Baire class 1 functions.

The notion of a complete $K$-upper semi-continuous function was
introduced and characterized in Solecki~\cite[Section 5]{Solecki}; this notion naturally 
extends here. Let $f:\cX\to K$ be an upper semi-Baire class $\xi$ function.
We say that $f$ is \emph{$K$-upper semi-Baire class $\xi$ complete} if
for every upper semi-Baire class $\xi$ function $g:2^\omega\to K$, there 
exists a continuous function $\phi:2^\omega\to\cX$ such that $g=f\circ\phi$.
This notion generalizes the notion of completeness of sets defined
above.\footnote{We emphasize that the target space $K$ is an essential part of the
definition. For example, if $f,g$ are constant functions taking different 
values, no function $\phi$ has the stated property.}
Indeed, let $\ind{A}:\cX\to\{ 0,1\}$ be the characteristic function
a given subset $A\subset\cX$. Then $\ind{A}$ is upper semi-Baire class $\xi$ if and
only if $A\in\borelp{0}{\xi +1}(\cX)$, and
$\ind{A}$ is $\{ 0,1\}$-upper semi-Baire class $\xi$ complete if and only if
$A$ is a $\borelp{0}{\xi+1}$-complete subset of $\cX$.

\subsection{Statement of results}

The group ${\rm GL}_2(\Q)$ acts naturally on the space $\RQ$
via \emph{M\"obius transformations}:
\be\label{eq:Mobius}
g \alpha\defeq\frac{a\alpha+b}{c\alpha+d}
\quad\text{for any~}g=\begin{pmatrix}a&b\\c&d\end{pmatrix}\in {\rm GL}_2(\Q)
\text{~and~}\alpha\in\RQ.
\ee
Our first theorem (proved in \S\ref{sec:GL2Q}) asserts that the type
of an irrational number is well-defined on the orbits of this action.

\begin{theorem}\label{thm:main-GL2}
For all $\alpha\in\RQ$ and $g\in{\rm GL}_2(\Q)$ we have
$\tau(g\alpha)=\tau(\alpha)$.
\end{theorem}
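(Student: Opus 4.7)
The plan is to establish the equality by proving a single algebraic estimate relating the approximation errors of $\alpha$ and $g\alpha$ by rationals, and then to deduce both directions of the inequality by also applying the argument to $g^{-1}\in GL_2(\Q)$. I would use the definition of $\tau(\alpha)$ in the abstract (equivalent to \eqref{eq:tau-defn}): $\tau(\alpha)$ is the infimum of those $t$ for which $|\alpha-h/k|<k^{-t-1}$ has finitely many solutions in $h,k\in\Z$, $k>0$.

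Writing $g=\begin{pmatrix} a & b \\ c & d \end{pmatrix}$, the key computation is the identity
\[
g\alpha-g(p/q)=\frac{(ad-bc)(\alpha-p/q)}{(c\alpha+d)(cp/q+d)}.
\]
Since $\alpha\in\RQ$ and $g$ is invertible, $c\alpha+d\neq 0$, so for any $p/q$ close enough to $\alpha$ the denominator $(c\alpha+d)(cp/q+d)$ is bounded away from $0$ and from $\infty$ by constants depending only on $\alpha$ and $g$. This yields $|g\alpha-g(p/q)|\le C|\alpha-p/q|$ for some $C=C(\alpha,g)>0$ and all but finitely many rationals $p/q$.

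Next I would write the entries of $g$ over a common positive integer denominator to obtain $g(p/q)=(Ap+Bq)/(Cp+Dq)$ with $A,B,C,D\in\Z$. Letting $P/Q$ denote this fraction in lowest terms, the denominator satisfies $Q\le|Cp+Dq|\le Kq$ for some constant $K=K(\alpha,g)$ whenever $p/q$ is close to $\alpha$ (since $|p|\le(|\alpha|+1)q$ in that case). Combining the two estimates, each solution of $|\alpha-p/q|<q^{-t-1}$ produces
\[
|g\alpha-P/Q|\le Cq^{-t-1}\le CK^{t+1}\,Q^{-t-1},
\]
which is bounded above by $Q^{-s-1}$ for any fixed $s<t$ once $Q$ (equivalently $q$) is sufficiently large. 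Because the M\"obius transformation $g$ is a bijection of $\Q\cup\{\infty\}$, distinct rationals $p/q$ produce distinct images $P/Q$; hence infinitely many solutions of the $\alpha$-inequality yield infinitely many solutions of the $g\alpha$-inequality. This proves $\tau(g\alpha)\ge s$ for every $s<\tau(\alpha)$, so $\tau(g\alpha)\ge\tau(\alpha)$. The reverse inequality follows by running the same argument with $g^{-1}\in GL_2(\Q)$ applied at the point $g\alpha$.

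The step requiring the most care is the simultaneous tracking of the comparison $|g\alpha-g(p/q)|\asymp|\alpha-p/q|$ and of the ratio $Q/q$: one must verify that $Q$ grows at most linearly in $q$, for otherwise a gain of $q^{-t-1}$ need not translate into a gain of $Q^{-t-1}$ up to constants. One should also set aside the at most one exceptional rational $p/q=-d/c$ at which $g$ is undefined, but discarding a single value cannot affect whether a set is infinite, so this causes no real difficulty.
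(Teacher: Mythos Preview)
Your argument is correct and takes a genuinely different route from the paper's. The paper proceeds structurally: it first proves a lemma (Lemma~\ref{lem:elem-opns}) establishing that the \emph{set} $\cT(\alpha)=\{\theta\ge 1:\liminf_q q^\theta\nearint{q\alpha}=0\}$ is invariant under each of the four ``elementary'' transformations $\alpha\mapsto -\alpha$, $\alpha\mapsto r\alpha$ ($r\in\Q_+^\times$), $\alpha\mapsto\alpha+1$, and $\alpha\mapsto\alpha^{-1}$, and then invokes the Bruhat decomposition $GL_2(\Q)=TN\cup TNwN$ to conclude that the subgroup preserving $\cT$ is all of $GL_2(\Q)$. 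Your proof is a single direct estimate valid for an arbitrary $g$, based on the identity $g\alpha-g(p/q)=(ad-bc)(\alpha-p/q)/[(c\alpha+d)(cp/q+d)]$ together with the linear bound $Q\le Kq$ on the new denominator; the reverse inequality then comes for free from $g^{-1}$. Your approach is more elementary and self-contained, avoiding any group-theoretic input; the paper's approach is more structural and in fact yields the slightly stronger conclusion that the whole set $\cT(\alpha)$, not merely its supremum $\tau(\alpha)$, is $GL_2(\Q)$-invariant. One small point worth making explicit in your write-up: the phrase ``once $Q$ (equivalently $q$) is sufficiently large'' tacitly uses that the images $P_n/Q_n$ are infinitely many \emph{distinct} rationals accumulating at the irrational $g\alpha$, which forces $Q_n\to\infty$ along a subsequence; you have the ingredients for this (bijectivity of $g$ on $\Q\cup\{\infty\}$), but the implication $Q_n\to\infty$ deserves a sentence.
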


A consequence is that the type of an irrational number
does not depend on the finite beginning of its continued fraction development
(see Corollary~\ref{cor:get-off-my-tail}). Next, we show that $\tau:\RQ\to[1,\infty]$ is a
\emph{densely onto} map, meaning that the pre-image of any singleton is dense
in the domain. Hence, the map afforded by type is
discontinuous in quite a spectacular way.

\begin{theorem}\label{thm:densely-onto}
The function $\tau:\RQ\to[1,\infty]$ is surjective.
Moreover, for any given $\ttt\in[1,\infty]$, the set
$\{\alpha\in\RQ:\tau(\alpha)=\ttt\}$
is dense in the space $\RQ$.
\end{theorem}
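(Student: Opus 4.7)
The plan is to separate the surjectivity claim from the density claim: once I produce, for each value $\ttt\in[1,\infty]$, a single witness $\alpha_\ttt\in\RQ$ with $\tau(\alpha_\ttt)=\ttt$, Theorem \ref{thm:main-GL2} immediately delivers density. Indeed, for every $r\in\Q$ the matrix $g_r=\left(\begin{smallmatrix}1&r\\0&1\end{smallmatrix}\right)$ lies in ${\rm GL}_2(\Q)$ and acts on $\RQ$ via $\alpha\mapsto\alpha+r$. Thus $\alpha_\ttt+\Q$ is contained in $\RQ$ (since $\alpha_\ttt$ is irrational), is dense in $\R$ (since $\Q$ is), and by Theorem \ref{thm:main-GL2} every one of its elements has type $\ttt$. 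Hence $\tau^{-1}(\ttt)$ is dense in $\RQ$.

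It therefore suffices to exhibit one $\alpha_\ttt$ of each type. The two extreme values are classical: $\tau(\sqrt{2})=1$ by Roth's theorem (recalled in \S\ref{sec:irrtypdef}), and any Liouville number---for instance $\sum_{n\ge 1}10^{-n!}$---has type $\infty$. The substantive case is $1<\ttt<\infty$, which I would handle via simple continued fractions. Construct $\alpha=[0;a_1,a_2,\ldots]$ inductively, setting $a_{n+1}=\max\{1,\lfloor q_n^{\ttt-1}\rfloor\}$, where $p_n/q_n$ denotes the $n$-th convergent of $\alpha$. The recursion $q_{n+1}=a_{n+1}q_n+q_{n-1}$ then yields, for all sufficiently large $n$, two-sided bounds of the form $\tfrac12 q_n^\ttt\le q_{n+1}\le 2q_n^\ttt$.

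Two verifications then remain, both resting on the classical estimate $1/(q_n+q_{n+1})<\nearint{q_n\alpha}<1/q_{n+1}$. For $\theta<\ttt$, one has $q_n^\theta\nearint{q_n\alpha}<q_n^\theta/q_{n+1}\to 0$, so $\theta$ is admissible in the definition \eqref{eq:tau-defn} and $\tau(\alpha)\ge\ttt$. For $\theta>\ttt$, I invoke the best-approximation property of convergents: every positive integer $q$ with $q_n\le q<q_{n+1}$ satisfies $\nearint{q\alpha}\ge\nearint{q_n\alpha}>1/(q_n+q_{n+1})$, whence for all large $n$
\[
q^\theta\nearint{q\alpha}\ge q_n^\theta\cdot\frac{1}{q_n+q_{n+1}}\ge\frac{q_n^{\theta-\ttt}}{4},
\]
which tends to $\infty$. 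Consequently $\liminf_{q\in\N}q^\theta\nearint{q\alpha}=+\infty$, so $\theta$ is not admissible; letting $\theta\searrow\ttt$ then gives $\tau(\alpha)\le\ttt$, and combined with the previous bound $\tau(\alpha)=\ttt$.

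The main obstacle is the upper bound $\tau(\alpha)\le\ttt$ in the intermediate case: one must control $\nearint{q\alpha}$ not merely at the convergent denominators but uniformly across all $q$. This is exactly what the best-approximation property of convergents supplies, and where the specific recursive choice of $a_{n+1}$ is essential. The remaining pieces---the two extreme cases and the density reduction via ${\rm GL}_2(\Q)$---are straightforward.
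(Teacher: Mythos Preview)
Your proposal is correct, and the density reduction via rational translations is exactly the paper's first proof. For surjectivity the underlying idea is also the same---choose the partial quotients so that $q_{n+1}\approx q_n^{\ttt}$---but the packaging differs: the paper encodes this choice in its oriented-graph ``targeting lemma'' (Lemma~\ref{lem:targeting} and Proposition~\ref{prop:happy-trails}) and then reads off $\tau(\alpha)=\ttt$ from the characterization $\tau(\alpha)=\limsup_n \log k_{n+1}/\log k_n$ (Lemma~\ref{eq:norah-jones}), handling all $\ttt\in[1,\infty]$ uniformly; you instead work directly from the definition \eqref{eq:tau-defn} and supply the upper bound $\tau(\alpha)\le\ttt$ via the best-approximation property of convergents. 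Your route is more self-contained for this theorem alone, while the paper's graph formalism is reusable machinery for the later completeness results. Two minor remarks: your construction already covers $\ttt=1$ (it yields $a_{n+1}\equiv 1$, i.e.\ the golden ratio), so the separate appeal to Roth for $\sqrt{2}$ is unnecessary---bounded partial quotients give type~$1$ directly via Lemma~\ref{eq:norah-jones}; and had you invoked that lemma you could have dispensed with the best-approximation argument altogether, since $q_{n+1}\sim q_n^{\ttt}$ immediately gives $\limsup_n \log q_{n+1}/\log q_n=\ttt$.
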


We give three proofs of Theorem~\ref{thm:densely-onto}, one
in \S\ref{sec:GL2Q}, another in \S\ref{sec:heads-or-tails}, and another in \S\ref{sec:complete functions}. The next result is proved in \S\ref{sec:complexity}.

\begin{theorem}\label{thm:main2}
For every $\ttt\in[1,\infty)$, the set $\{\alpha\in\RQ:\tau(\alpha)=\ttt\}$
is $\borelp{0}{3}$-complete. In other words, it is a $\borelp03$ set
that does not belong to $\borels03$. The set $\{\alpha\in\RQ:\tau(\alpha)=\infty\}$ is $\borelp{0}{2}$-complete.
\end{theorem}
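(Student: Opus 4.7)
The plan is to establish the two upper bounds directly from the definition of $\tau$ and to prove completeness by exhibiting continuous Wadge reductions via continued-fraction expansions. For the upper bounds, for each $\theta\in\R$ the set
\[E(\theta) \defeq \{\alpha\in\RQ : \liminf_q q^\theta\nearint{q\alpha}=0\} = \bigcap_{k,N\geq 1}\bigcup_{q\geq N}\{\alpha : q^\theta\nearint{q\alpha}<1/k\}\]
lies in $\borelp{0}{2}$, and its complement $F(\theta)$ lies in $\borels{0}{2}$. Monotonicity of $\theta\mapsto q^\theta$ makes $\{\theta:\liminf q^\theta\nearint{q\alpha}=0\}$ downward closed, so $\{\tau\geq\ttt\}=\bigcap_{\theta'\in\Q\cap(-\infty,\ttt)}E(\theta')\in\borelp{0}{2}$ and $\{\tau\leq\ttt\}=\bigcap_{\theta'\in\Q\cap(\ttt,\infty)}F(\theta')\in\borelp{0}{3}$. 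Intersecting yields $\{\tau=\ttt\}\in\borelp{0}{3}$ for $\ttt\in[1,\infty)$, and $\{\tau=\infty\}=\bigcap_{N\in\N}\{\tau\geq N\}\in\borelp{0}{2}$.

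For the Liouville case I would use the classical identity $\tau(\alpha)=1+\limsup_n\log a_{n+1}/\log q_n$ for $\alpha=[0;a_1,a_2,\ldots]$ with convergents $p_n/q_n$, and reduce the canonical $\borelp{0}{2}$-complete set $Q_2=\{z\in 2^\omega:\exists^\infty n,\,z(n)=1\}$ via the continuous map $\phi_\infty:2^\omega\to\RQ$ sending $z$ to $[0;a_1,a_2,\ldots]$ with $a_1=1$, and with $a_{k+1}=q_k^k$ if $z(k)=1$, else $a_{k+1}=1$. Then $\phi_\infty(z)$ is Liouville iff there are infinitely many $k$ with $\log a_{k+1}/\log q_k=k$, iff $z\in Q_2$. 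Continuity is immediate since each $a_k$ depends on only finitely many coordinates of $z$.

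For $\ttt\in[1,\infty)$ I would reduce the set $E=\{y\in 2^{\N\times\N}:\forall m,\,y_m\text{ eventually }1\}$, which is $\borelp{0}{3}$-complete: ``eventually $1$'' is $\borels{0}{2}$-complete in $2^\omega$, and assembling coordinate-wise reductions makes the countable intersection $\borelp{0}{3}$-complete. Fix a bijection $\nu:\N\to\{m_0,m_0+1,\ldots\}\times\N$, $\nu(k)=(m_k,n_k)$, with each fiber $\{k:m_k=m\}$ infinite and $m_0$ chosen (for $\ttt>1$) so that $1/(m_0+1)<\ttt-1$; for $\ttt=1$ take $m_0=0$ and note that negative exponents collapse to $a_{k+1}=1$. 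Define $\phi_\ttt(y)=[0;a_1,a_2,\ldots]$ by
\[a_{k+1} = \max\!\left(1,\,\lfloor q_k^{s_k}\rfloor\right), \qquad s_k = (\ttt-1)+\frac{(-1)^{y(m_k,n_k)}}{m_k+1}.\]
If $y\in E$, each row yields only finitely many ``spike'' contributions $(\ttt-1)+1/(m+1)$, which accumulate at $\ttt-1$ as $m\to\infty$, while the ``sustaining'' contributions $(\ttt-1)-1/(m+1)$ occur infinitely often for every $m$ and also accumulate at $\ttt-1$ from below; hence $\limsup s_k=\ttt-1$ and $\tau(\phi_\ttt(y))=\ttt$. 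If $y\notin E$, some row $m_*$ produces infinitely many spikes of fixed size $(\ttt-1)+1/(m_*+1)>\ttt-1$, forcing $\tau(\phi_\ttt(y))>\ttt$. Continuity of $\phi_\ttt$ is standard.

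The main obstacle is this finite-$\ttt$ reduction. Since $\tau$ is governed by a $\limsup$, which is insensitive to the contribution of any single row of $y$, encoding the $\forall m$ quantifier of $E$ requires a device that makes every bad row visible. The choice of strictly positive spike $(\ttt-1)+1/(m+1)$ attached to each row $m$ accomplishes exactly this: a single bad row by itself pushes the $\limsup$ above $\ttt-1$, while good rows produce only finitely many diminishing spikes that neither exceed nor obstruct $\ttt-1$ as the $\limsup$.
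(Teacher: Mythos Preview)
Your approach for finite $\ttt$ is essentially the paper's: both reduce a canonical $\borelp{0}{3}$-complete set by building continued fractions whose ratios $\log a_{k+1}/\log q_k$ (equivalently $\log q_{k+1}/\log q_k$) are steered to values accumulating at $\ttt-1$ from above, so that a single persistently ``bad'' component of the input forces the $\limsup$ strictly above $\ttt-1$. The paper packages this via its abstract ``target-controlled'' framework and reduces from $\cN_\infty=\{\beta\in\N_1^\omega:\lim_n\beta(n)=\infty\}$, while you work with the explicit formula $a_{k+1}=\lfloor q_k^{s_k}\rfloor$ and reduce directly from the matrix set $E$; these are the same idea at different levels of abstraction. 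For $\ttt=\infty$ the paper takes a genuinely different route: rather than an explicit reduction, it notes that $\{\tau=\infty\}$ is a dense $\borelp{0}{2}$ set with dense complement and invokes Baire's theorem together with Wadge's theorem. Your direct reduction from $Q_2$ is equally valid and more constructive.

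One small slip to fix: for $\ttt>1$ your bijection $\nu$ lands in $\{m_0,m_0+1,\ldots\}\times\N$, so $\phi_\ttt$ ignores rows $0,\ldots,m_0-1$ of $y$. A $y\notin E$ whose only bad row has index below $m_0$ would then be sent into $\{\tau=\ttt\}$, breaking the reduction as written. The repair is trivial---either take $E$ on $2^{\{m_0,m_0+1,\ldots\}\times\N}$ (still $\borelp{0}{3}$-complete), or keep $\nu:\N\to\N\times\N$ and replace $m_k+1$ by $m_k+m_0+1$ in the definition of $s_k$---but it should be made explicit.
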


Our main result is the following; it is proved in \S\ref{sec:complete functions}.

\begin{theorem}\label{thm:main3}
The type function is $[1,\infty]$-upper semi-Baire class~1 complete.
\end{theorem}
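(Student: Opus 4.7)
The plan combines two ingredients. On one hand, that $\tau$ itself is upper semi-Baire class~1 follows from Lemma~\ref{eq:norah-jones} (the set $\{\tau<r\}$ is readily written as an $F_\sigma$ via the $\limsup$ characterization). On the other hand, by the characterization of upper semi-Baire class~1 functions as limits superior of continuous functions, due to Elekes et al~\cite[Thm.~2.1]{Elekes}, any $g:2^\omega\to[1,\infty]$ of this class can be written as $g=\limsup_n f_n$ for continuous $f_n:2^\omega\to[1,\infty]$. Replacing $f_n$ by $\min(f_n,n)$ and then approximating $f_n$ uniformly by a clopen step function $\tilde f_n$ with $|\tilde f_n-f_n|<1/n$ (possible since $2^\omega$ is compact and zero-dimensional), one obtains locally constant functions $\tilde f_n$ taking finitely many values in $[1,n]$ and still satisfying $\limsup_n\tilde f_n=g$.

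On the number-theoretic side, Lemma~\ref{eq:norah-jones} together with the classical estimate $\nearint{q_n\alpha}\asymp 1/(a_{n+1}q_n)$ yields the continued-fraction formula
\[
\tau(\alpha)=1+\limsup_{n\to\infty}\frac{\log a_{n+1}(\alpha)}{\log q_n(\alpha)},
\]
where $[a_0;a_1,a_2,\ldots]$ is the continued fraction expansion of $\alpha$ and $q_n$ the $n$th convergent denominator. This motivates constructing $\phi:2^\omega\to(0,1)\setminus\Q\subset\RQ$ by
\[
\phi(x)\defeq[0;a_1(x),a_2(x),\ldots],\qquad a_{n+1}(x)\defeq\bigl\lceil q_n(x)^{\tilde f_n(x)-1}\bigr\rceil,
\]
where inductively each $q_n(x)$ (depending only on $a_1(x),\ldots,a_n(x)$) and hence each $a_{n+1}(x)$ is a locally constant function of $x$, so $\phi$ is continuous from $2^\omega$ into $(0,1)\setminus\Q$ via the usual continued-fraction homeomorphism. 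Since $\log a_{n+1}(x)/\log q_n(x)=\tilde f_n(x)-1+O(1/\log q_n(x))$ and $\log q_n(x)\to\infty$, we obtain $\tau(\phi(x))=1+\limsup_n(\tilde f_n(x)-1)=g(x)$, as required.

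The main obstacle I expect is the careful bookkeeping needed to make the approximation step compatible with both the $\limsup$ structure and the continuity of $\phi$, especially at the boundary values $g(x)=1$ (where the construction must force $\log a_{n+1}/\log q_n\to 0$) and $g(x)=\infty$ (where $\tilde f_n(x)$ must be unbounded along a subsequence while remaining finite-valued on each level~$n$, so that $\tilde f_n$ stays a genuine clopen step function). A secondary technicality is extracting from Elekes et al exactly the form of the characterization needed for values in $[1,\infty]$; if that reference provides only limits superior of real-valued continuous functions, one inserts a preliminary reduction via a homeomorphism $[1,\infty]\to[0,1]$.
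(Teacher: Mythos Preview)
Your proof is correct and takes a genuinely different route from the paper's. Both arguments share two ingredients: the Elekes et al characterization of upper semi-Baire class~1 functions as $\limsup$'s of functions on the tree of finite sequences, and the ``targeting'' idea (choosing the next partial quotient so that $\log k_{n+1}/\log k_n$ lands near a prescribed value; this is the paper's Lemma~\ref{lem:targeting}). The difference lies in how these are combined.

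The paper first proves a general completeness result (Theorem~\ref{thm:gencomp}) for any map of the form $v\mapsto\limsup_n\gell(v_n)$ with $\gell$ ``target-controlled'', and then specializes to $\tau$. That argument is indirect: it sets up a two-player game in which Player~1 plays $x\in 2^\omega$ and Player~2 responds with a trail $v$, with Player~1 winning iff $g(x)\ne f(v)$. Borel determinacy says the game is determined; the paper rules out a winning strategy $\psi$ for Player~1 by applying Elekes et al to $g\circ\psi$ (a function of the \emph{output} trail $v$) and then invoking a fixed-point–style construction (Lemma~\ref{lem:tre}) to produce $v$ with $g(\psi(v))=f(v)$. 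Player~2's winning strategy is then the desired $\phi$.

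You instead apply Elekes et al directly to $g$ on $2^\omega$, obtaining locally constant data $\tilde f_n(x)$ depending only on the \emph{input} $x|_n$, and then build $\phi(x)$ explicitly via $a_{n+1}(x)=\lceil q_n(x)^{\tilde f_n(x)-1}\rceil$. This sidesteps the game entirely and avoids Borel determinacy; it is more elementary and fully constructive. (In fact the same idea, using the target-controlled hypothesis in place of your explicit ceiling formula, yields a determinacy-free proof of Theorem~\ref{thm:gencomp} as well.) What the paper's approach buys is a cleanly packaged abstract theorem with the analytic content isolated in Lemma~\ref{lem:tre}; what your approach buys is a short, explicit reduction. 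The technical caveats you flag---clamping $\tilde f_n$ to $[1,\infty)$ so that $a_{n+1}\ge 1$ and the estimate $\log a_{n+1}/\log q_n=\tilde f_n-1+O(1/\log q_n)$ remains valid, and passing the Elekes et al representation through an increasing homeomorphism $[1,\infty]\to[0,1]$---are real but routine; the paper handles the second point exactly as you suggest.
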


Very often in descriptive set theory, we can construct complete or universal objects using ad-hoc internal methods. There are few classes of sets or functions for which natural complete elements, coming from the rest of mathematics, are known. The type function provides an example of such an object.

\newpage{\large\section{An oriented graph}}

\subsection{Edges and trails}
\label{sec:edges-and-trails}

We denote by $\N$ the set of natural numbers (including zero)
and by $\N_1$ the set of strictly positive natural numbers. In the sequel,
we extensively use an oriented graph $\cG$, defined as follows.
For the vertex set, we take
$$
V\defeq\big\{(x,y)\in\N_1^2:x<y\}.
$$
Two vertices $v=(x,y)$ and $\tilde v=(\tilde x,\tilde y)$
are connected by a directed edge from $v$ to $\tilde v$
if and only if there is a \emph{positive} integer $a$ such that
\be\label{eq:squig-games}
\tilde x=y\mand\tilde y=ay+x.
\ee
We write $v\squig{a}\tilde v$ in this situation, and
we say that $\tilde v$ is a vertex \emph{succeeding}~$v$,
and that $v$ is the vertex \emph{preceding}~$\tilde v$.
Note that every vertex $(\tilde x,\tilde y)$ has at most one predecessor
$(x,y)$, since the relations \eqref{eq:squig-games}
imply that
$$
a=\lfloor{\tilde y}/{\tilde x}\rfloor,\qquad
x=\tilde y-a\tilde x,\qquad
y=\tilde x,
$$
and therefore $(x,y)$ is determined uniquely by
$(\tilde x,\tilde y)$.\footnote{For all $u\in\R$, we denote by $\fl{u}$
the largest integer that does not exceed $u$.}

For a given vertex $v_0\in V$ and a sequence
$a=(a_n)_{n\ge 1}$ of positive integers, we denote by ${\tt Tr}(v_0,a)$ 
the associated \emph{trail}:
\be\label{eq:inf-trail}
{\tt Tr}(v_0,a):\quad v_0\squig{a_1} v_1
\squig{a_2} v_2
\squig{a_3} v_3
\squig{a_4}  \cdots.
\ee

In this paper, we are interested in studying irrational numbers
of a fixed type. Motivated by a characterization of type given 
in \S\ref{sec:type-characterization} below (see \eqref{eq:type-character})
we introduce a map $\ell:V\to [1,\infty)$ which is defined by
\be\label{eq:ell-defn}
\forall\,v=(x,y)\in V:\quad
\ell(v)\defeq\begin{cases}
{\displaystyle\frac{\log y}{\log x}}&\quad\text{if $x>1$},\\ \\
1&\quad\text{if $x=1$}.
\end{cases}
\ee
For an infinite trail
${\tt T}={\tt Tr}(v_0,a)$ of the form \eqref{eq:inf-trail},
we define the \emph{$\ell$-limit of ${\tt T}$} as
$$
\ell({\tt T})\defeq\mathop{{\rm lim}}\limits_{n\to\infty}\ell(v_n),
$$
provided this limit exists. We also define the
\emph{$\ell$-limsup of ${\tt T}$} to be the quantity
$$
\overline\ell({\tt T})
\defeq\mathop{\overline{\rm lim}}\limits_{n\to\infty}\ell(v_n).
$$
Note that $\overline\ell({\tt T})\in[1,\infty]$.

\subsection{Hitting the target}

\begin{lemma}\label{lem:targeting}
Let $\eps\in(0,1)$, and suppose that
\be\label{eq:ynu-bd}
\ttt\ge 1\mand y>(10/\eps)^{1/\eps}.
\ee
Given $v=(x,y)\in V$,
there exists a positive integer $a$ and vertex
$\tilde v=(\tilde x,\tilde y)\in V$ such that
\eqref{eq:squig-games} holds
$($in other words, $v\squig{a}\tilde v)$
and $\ell(\tilde v)\in[\ttt,\ttt+\eps)$.
\end{lemma}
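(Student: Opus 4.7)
The plan is to reduce everything to the arithmetic statement that the interval of real numbers $a$ for which the candidate target $\tilde v=(y,ay+x)$ satisfies $\ell(\tilde v)\in[\ttt,\ttt+\eps)$ is long enough to contain a positive integer.

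First I would translate the desired conclusion. Any edge out of $v=(x,y)$ has the form $v\squig{a}\tilde v$ with $\tilde v=(y,ay+x)$, $a\ge 1$, and such $\tilde v$ automatically lies in $V$ since $ay+x>y$. Because $y>(10/\eps)^{1/\eps}>1$, we have $\tilde x=y>1$, so
\[
\ell(\tilde v)=\frac{\log(ay+x)}{\log y}.
\]
Hence $\ell(\tilde v)\in[\ttt,\ttt+\eps)$ is equivalent to the single two-sided inequality
\[
y^{\ttt}\le ay+x<y^{\ttt+\eps},
\qquad\text{i.e.,}\qquad
a\in I\defeq\bigl[\,(y^\ttt-x)/y\,,\,(y^{\ttt+\eps}-x)/y\,\bigr).
\]

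Next I would estimate the length of $I$. A direct computation gives $|I|=y^{\ttt-1}(y^{\eps}-1)$. Since $\ttt\ge 1$ we have $y^{\ttt-1}\ge 1$, and the hypothesis $y>(10/\eps)^{1/\eps}$ gives $y^{\eps}>10/\eps>10$ (as $\eps<1$), so
\[
|I|\;\ge\; y^{\eps}-1\;>\;\frac{10}{\eps}-1\;>\;1.
\]
Therefore $I$ contains at least one integer.

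It remains to verify that $I$ contains a \emph{positive} integer. The lower endpoint $L=y^{\ttt-1}-x/y$ satisfies $L\ge 1-x/y>0$ because $\ttt\ge 1$ and $x<y$. If $L\ge 1$, any integer in $I$ is $\ge 1$. If $0<L<1$, then the upper endpoint exceeds $L+1>1$, so the integer $1$ lies in $I$. In either case we produce a positive integer $a\in I$, and setting $\tilde v\defeq(y,ay+x)$ yields the required edge $v\squig{a}\tilde v$ with $\ell(\tilde v)\in[\ttt,\ttt+\eps)$.

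There is no real obstacle here beyond bookkeeping; the only subtlety is the borderline case $\ttt=1$, where $L$ can be less than $1$ and one must explicitly check that $a=1$ (or some positive integer) sits inside $I$. The quantitative bound $y>(10/\eps)^{1/\eps}$ is calibrated precisely to force $|I|>1$ with room to spare, which is the whole point of the hypothesis \eqref{eq:ynu-bd}.
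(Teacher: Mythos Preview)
Your proof is correct. You and the paper both reduce to placing $ay+x$ in $[y^{\ttt},y^{\ttt+\eps})$, but the mechanisms differ. The paper picks $a=\lceil y^{\ttt-1}-x/y\rceil$ explicitly, obtains the two-sided bound $y^{\ttt}\le \tilde y<y^{\ttt}+y$, and then uses $\log(1+u)\le u$ together with the hypothesis on $y$ to conclude $\ell(\tilde v)<\ttt+1/\log y<\ttt+\eps$. You instead compute the length of the interval $I$ of admissible real $a$'s as $y^{\ttt-1}(y^{\eps}-1)$ and note that the hypothesis $y^{\eps}>10/\eps$ forces $|I|>1$ outright, so $I$ must contain an integer. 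Your route is a bit cleaner in that it sidesteps the logarithm estimate entirely; the paper's route pins down the specific $a$ and incidentally gives the sharper upper bound $\ell(\tilde v)<\ttt+1/\log y$, though nothing downstream exploits that extra precision.
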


\begin{proof}
For all $u\in\R$, let $\rf{u}$ be the smallest integer that is greater than
or equal to $u$; note that $u\le\rf{u}<u+1$ for all $u$. The integer
$$
a\defeq\rf{y^{\ttt-1}-x/y}
$$
is at least one since $x<y$ and $y^{\ttt-1}\ge 1$, and
we have
$$
y^{\ttt-1}-x/y\le a<y^{\ttt-1}-x/y+1.
$$
Put $\tilde x\defeq y$ and
$\tilde y\defeq ay+x$, so that \eqref{eq:squig-games} holds.
Since
$$
\tilde y=ay+x\ge(y^{\ttt-1}-x/y)y+x=y^\ttt,
$$
it follows that
$$
\ell(\tilde v)=\frac{\log\tilde y}{\log\tilde x}
\ge\frac{\log y^\ttt}{\log y}=\ttt.
$$
Similarly,
$$
\tilde y=ay+x<(y^{\ttt-1}-x/y+1)y+x=y^\ttt+y,
$$
and therefore
$$
\ell(\tilde v)=\frac{\log\tilde y}{\log\tilde x}
<\frac{\log(y^\ttt+y)}{\log y}
=\frac{\ttt\log y+\log(1+y^{1-\ttt})}{\log y}
<\ttt+\frac{y^{1-\ttt}}{\log y},
$$
where in the last step, we used the fact that $\log(1+u)\le u$
for all $u>0$. In view of \eqref{eq:ynu-bd} we have
$$
\frac{y^{1-\ttt}}{\log y}\le\frac{1}{\log y}<\frac{\eps}{\log(10/\eps)}<\eps,
$$
thus we obtain the upper bound $\ell(\tilde v)<\ttt+\eps$. Thus 
$\ell(\tilde v)\in[\ttt,\ttt+\eps)$.
\end{proof}

\begin{proposition}\label{prop:happy-trails}
For every $v=(x,y)\in V$ and $\ttt\in[1,\infty]$, there exists
an infinite trail ${\tt T}={\tt Tr}(v,a)$
whose $\ell$-limit is $\ell({\tt T})=\ttt$.
\end{proposition}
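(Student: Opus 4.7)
The plan is to split into cases based on the value of $\ttt$: the case $\ttt \in (1, \infty)$ invokes Lemma \ref{lem:targeting} iteratively, while the endpoint cases $\ttt = 1$ and $\ttt = \infty$ admit more elementary constructions that avoid the lemma.

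\textbf{Case $\ttt = 1$.} Write $v_n = (x_n, y_n)$. I would simply take $a_n = 1$ for every $n \geq 1$, so that $x_{n+1} = y_n$ and $y_{n+1} = y_n + x_n$. This yields the Fibonacci-type recurrence $y_{n+1} = y_n + y_{n-1}$, and both $\log y_n$ and $\log x_n$ are asymptotic to $n \log \phi$ (with $\phi = (1+\sqrt{5})/2$). Consequently $\ell(v_n) = \log y_n / \log x_n \to 1 = \ttt$.

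\textbf{Case $\ttt \in (1, \infty)$.} First I would insert finitely many padding edges (choosing some large $a_j$) to arrive at a vertex $v_N$ whose $y$-coordinate is large enough to serve as a base for iteration. From $v_N$ onward, I iteratively apply Lemma \ref{lem:targeting} with a sequence $\eps_n \downarrow 0$, obtaining successors $v_{N+n}$ with $\ell(v_{N+n}) \in [\ttt, \ttt + \eps_n)$, whence $\ell(v_{N+n}) \to \ttt$. The initial padding contributes only finitely many vertices, which do not affect the limit. The main obstacle, and the main technical point, is to arrange that the hypothesis $y_{N+n} > (10/\eps_n)^{1/\eps_n}$ of the lemma persists at each step as $\eps_n$ shrinks. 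This is where the assumption $\ttt > 1$ is essential: the construction in the lemma yields $y_{n+1} \geq y_n^{\ttt}$, so $\log y_{N+n}$ grows at least like $\ttt^n \log y_N$, which easily outpaces the threshold $(1/\eps_n)\log(10/\eps_n)$ for any reasonable choice of $\eps_n \to 0$. Concretely, one may define $\eps_n$ implicitly by $(10/\eps_n)^{1/\eps_n} = y_{N+n}$; since $y_{N+n} \to \infty$, this choice automatically forces $\eps_n \to 0$ while preserving the lemma's hypothesis at every step.

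\textbf{Case $\ttt = \infty$.} Here no delicate targeting is needed. I would choose $a_n$ growing rapidly, for instance $a_n = y_{n-1}^n$, so that $y_n \geq y_{n-1}^{n+1}$. Then $\ell(v_n) = \log y_n / \log y_{n-1} \geq n+1 \to \infty = \ttt$.
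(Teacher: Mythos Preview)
Your argument is correct; the only cosmetic issue is that defining $\eps_n$ by the \emph{equality} $(10/\eps_n)^{1/\eps_n}=y_{N+n}$ does not literally give the strict inequality required in Lemma~\ref{lem:targeting}, but this is trivially repaired.

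The paper avoids your three-way case split by a single device: instead of targeting $\ttt$, it targets $\ttt_m\defeq\ttt+2/m$, where $m\ge 2$ is determined by the bracket $(10m)^m<y_n\le(10m+10)^{m+1}$, and Lemma~\ref{lem:targeting} then lands $\ell(v_{n+1})$ in $[\ttt+2/m,\ttt+3/m)$. Since the $y$-coordinates strictly increase at every step (they are integers and $y_{n+1}>x_{n+1}=y_n$), one gets $m\to\infty$ and the interval collapses to $\{\ttt\}$. This absorbs the case $\ttt=1$ automatically, because the actual target $\ttt_m$ always exceeds $1$; the concern you flag about insufficient growth of $y_n$ when $\ttt=1$ therefore never arises. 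Your explicit Fibonacci and super-exponential constructions at the endpoints are more elementary and arguably more transparent there, while the paper's version trades that for a uniform one-line construction.
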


\begin{proof}
In the argument that follows, we use induction on $n$ to
construct a sequence $(a_n)_{n\in\N}\subset\N$ 
for which the resulting trail
$$
{\tt T}={\tt Tr}(v,a):\quad
v\squig{a_0}
v_0\squig{a_1} v_1
\squig{a_2} v_2
\squig{a_3} v_3
\squig{a_4}  \cdots
$$
has the desired property.

Let $v_0\defeq(x_0,y_0)$ be the vertex defined by
$x_0=y$ and $y_0=400y+x$. Put $a_0\defeq 400$, and note that
$v\squig{a_0}v_0$. 

Next, suppose the vertex $v_n=(x_n,y_n)\in V$ has been
defined for some $n\in\N$. Let $m\ge 2$ be the unique integer such that
$$
(10m)^m<y_n\le(10m+10)^{m+1}
$$
(because each $y_n>400$, such an integer $m$ must exist). Put
$$
\eps_m\defeq m^{-1}\mand\ttt_m\defeq \ttt+2\eps_m\ge 1.
$$
Since $y_n>(10/\eps_m)^{1/\eps_m}$, we can apply Lemma~\ref{lem:targeting}
to conclude that there exists a positive integer $a_{n+1}$ and a vertex
$v_{n+1}=(x_{n+1},y_{n+1})\in V$ such that
$$
v_n\squig{a_{n+1}}v_{n+1}
\mand
\ell(v_{n+1})\in[\ttt_m,\ttt_m+\eps_m)=[\ttt+2\eps_m,\ttt+3\eps_m).
$$
As $n$ tends to infinity, we have
$$
y_n\to\infty\quad\Longrightarrow\quad
m\to\infty\quad\Longrightarrow\quad
\eps_m\to 0^+\quad\Longrightarrow\quad
\ell({\tt T})=\lim\limits_{n\to\infty}\ell(v_n)=\ttt
$$
as required.
\end{proof}

\newpage{\large\section{Continued fractions}}

\subsection{Background on continued fractions}
\label{sec:con-frac-notation}

A \emph{$($simple$)$ continued fraction} is an
expression of the form
\be\label{eq:contd-frac1}
a_0+\frac{1}{a_1+\frac{1}{a_2+\frac{1}{a_3+\cdots}}},
\ee
where the \emph{coefficients} $a_n$ are chosen independently
of one another.
In this paper, we consider only continued fractions
with $a_0\in\Z$ and $a_n\in\N_1\defeq\N\setminus\{0\}$ for each $n\ge 1$;
note that \eqref{eq:contd-frac1} always
converges under these conditions. When the number of terms is finite,
one writes
\be\label{eq:contd-frac2}
[a_0;a_1,a_2,\ldots,a_n]\defeq
a_0+\frac{1}{a_1+\frac{1}{a_2+\cdots\frac{1}{\cdots+\frac{1}{a_n}}}},
\ee
and then $[a_0;a_1,a_2,\ldots,a_n]\in\Q$.
On the other hand, given an infinite sequence 
$(a_0,a_1,a_2,\ldots)\in\Z\times\N_1^\omega$, we denote by
$\alpha=[a_0;a_1,a_2,\ldots]$ the value of the infinite continued fraction
\eqref{eq:contd-frac1}; in this case, $\alpha$ is necessarily irrational,
i.e., $\alpha\in\RQ$.
It is known that the map
$$
\alpha=[a_0;a_1,a_2,\ldots]\mapsto(a_0,a_1,a_2,\ldots)
$$
is homeomorphism of $\RQ$ (with the usual topology inherited from $\R$)
onto $\Z\times\N_1^\omega$ (with the product topology).
We give a proof of this fact below; see 
Proposition~\ref{prop:homeomorphism}.

For any $\alpha=[a_0;a_1,a_2,\ldots]$,
the associated sequence $(a_0,a_1,a_2,\ldots)$
is used to derive the \emph{sequence of convergents} 
$(\frac{h_0}{k_0},\frac{h_1}{k_1},
\frac{h_2}{k_2},\ldots)\in\Q^\omega$, where the numerators and 
denominators are defined inductively by
\begin{alignat}{3}
\label{eq:hn-recursion}
h_0&\defeq a_0,&\qquad h_1&\defeq a_1a_0+1,
&\qquad\forall\,n\in\N:\quad h_{n+2}&\defeq a_{n+2}h_{n+1}+h_n,\\
\label{eq:kn-recursion}
k_0&\defeq 1,&\qquad k_1&\defeq a_1,
&\qquad\forall\,n\in\N:\quad k_{n+2}&\defeq a_{n+2}k_{n+1}+k_n.
\end{alignat}
By induction, $k_{n+2}>k_{n+1}$
for all $n\in\N$; in particular, it is clear
that $k_n\to\infty$ as $n\to\infty$. Using Khinchin~\cite[\S2]{Khin2}
we infer that
\be\label{eq:badabean}
\forall\,n\in\N:\quad\frac{h_n}{k_n}=[a_0;a_1,\ldots,a_n];
\ee
in other words, each convergent is a finite continued fraction
\eqref{eq:contd-frac2}  obtained by truncating the
infinite continued fraction \eqref{eq:contd-frac1}.
The following bounds are known
(see \cite[Thms.\ 9 and 13]{Khin2}):
\be\label{eq:ineqs}
\frac{1}{k_n(k_n+k_{n+1})}<\bigg|\alpha-\frac{h_n}{k_n}\bigg|
<\frac{1}{k_nk_{n+1}};
\ee
in particular, from \eqref{eq:ineqs} we deduce that
\be\label{eq:limit-convs}
\lim_{n\to\infty}\frac{h_n}{k_n}=\alpha.
\ee

The next result is also well known;  see \cite[Thm.\ 19]{Khin2}.

\newpage

\begin{lemma}\label{lem:too-close}
If $\alpha\in\RQ$, and the inequality
$$
\bigg|\alpha-\frac{p}{q}\bigg|<\frac{1}{2q^2}
$$
holds with some integers $p$ and $q>0$, then
$p/q$ is one of the convergents of $\alpha$. In particular,
the inequality $q\nearint{q\alpha}<\tfrac12$ implies that
$q=k_n$ for some $n\in\N$.
\end{lemma}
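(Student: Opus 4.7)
The plan is to prove the classical Legendre theorem: any rational that approximates $\alpha$ to within $1/(2q^2)$ must be a convergent. Replacing $p/q$ by its reduced form only tightens the inequality, so I may assume $\gcd(p,q)=1$. Write $p/q=[b_0;b_1,\ldots,b_n]$ as a finite continued fraction, using the identity $[b_0;\ldots,b_n]=[b_0;\ldots,b_n-1,1]$ (valid when $b_n\geq 2$) to choose the parity of $n$ at will.

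Let $h'_j/k'_j$ ($j\leq n$) denote the convergents of this finite expansion, so that $h'_n=p$, $k'_n=q$, and $h'_n k'_{n-1}-h'_{n-1}k'_n=(-1)^{n-1}$. Introduce $\omega\in\R$ via
\[
\alpha=\frac{\omega h'_n+h'_{n-1}}{\omega k'_n+k'_{n-1}}.
\]
A direct computation then yields
\[
\alpha-\frac{p}{q}=\frac{(-1)^n}{k'_n(\omega k'_n+k'_{n-1})}.
\]
Choose the parity of $n$ so that $(-1)^n$ has the same sign as $\alpha-p/q$; then $\omega k'_n+k'_{n-1}>0$, and the hypothesis $|\alpha-p/q|<1/(2q^2)$ forces $\omega k'_n+k'_{n-1}>2k'_n$. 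Since $k'_{n-1}<k'_n$, this gives $\omega>2-k'_{n-1}/k'_n>1$.

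With $\omega>1$ in hand, expand $\omega=[b_{n+1};b_{n+2},\ldots]$ as a simple continued fraction (finite or infinite) with $b_{n+1}=\lfloor\omega\rfloor\geq 1$ and all subsequent partial quotients positive. Substituting this back, $\alpha=[b_0;b_1,\ldots,b_n,b_{n+1},\ldots]$ is a valid simple continued fraction expansion of the irrational number $\alpha$. By the uniqueness of the continued fraction expansion of an irrational (to be established in Proposition~\ref{prop:homeomorphism}), this sequence must coincide with $(a_0,a_1,\ldots)$; hence $p/q=h'_n/k'_n=h_n/k_n$ is the $n$-th convergent of $\alpha$.

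For the ``in particular'' clause, given $q\nearint{q\alpha}<\tfrac12$, I take $p$ to be the integer nearest to $q\alpha$, so that $|\alpha-p/q|<1/(2q^2)$; the first part then identifies $p/q$ with some convergent $h_n/k_n$, whence $q=k_n$. I expect the main obstacle to be the sign and parity bookkeeping at the outset: verifying that the parity of $n$ can always be chosen as claimed (with isolated care when $p/q$ is an integer or when $b_n=1$ leaves little room to shorten the expansion), and that the resulting $\omega$ is not merely positive but strictly greater than~$1$, so that it admits an honest simple continued fraction expansion with $b_{n+1}\geq 1$. Once these points are pinned down, the remainder is concatenation and an appeal to uniqueness.
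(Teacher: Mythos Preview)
The paper does not prove this lemma; it merely cites \cite[Thm.~19]{Khin2}. Your argument for the first assertion is precisely the classical Legendre proof found there, and it is correct. Two minor points: you only need $k'_{n-1}\le k'_n$, which always holds, to deduce $\omega>1$; and your forward reference to Proposition~\ref{prop:homeomorphism} for uniqueness is circular (that proposition, via Lemma~\ref{lem:heading-out}, invokes the present lemma), but uniqueness of the continued fraction expansion of an irrational is elementary---take integer parts and recurse---and needs no appeal to Legendre.

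There is a genuine gap in the ``in particular'' clause, though it is a defect of the statement rather than of your method. From $p/q=h_n/k_n$ you infer $q=k_n$, which requires $\gcd(p,q)=1$, and the nearest integer $p$ to $q\alpha$ need not be coprime to $q$. Indeed the clause is false as written: for $\alpha=[0;1,10,1,1,1,\ldots]$ one has $(k_n)_{n\ge 0}=(1,1,11,12,\ldots)$ and $10/11<\alpha<11/12$, whence $2\nearint{2\alpha}=4-4\alpha<4/11<\tfrac12$, yet $q=2$ is no $k_n$. What \emph{is} true is that such a $q$ is a positive multiple of some $k_n$, and this weaker conclusion already suffices for the application in the proof of Lemma~\ref{eq:norah-jones}.
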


\subsection{A characterization of type}
\label{sec:type-characterization}

The next lemma plays a crucial role in this paper,
for it shows that the type $\tau(\alpha)$ 
of an irrational number $\alpha$ is determined
by the sequence of denominators $(k_n)$
of the convergents of $\alpha$.
The origin of this result is unclear, however, the statement and
sketch of the proof appear in a 2004 preprint of Sondow;
see Sondow~\cite[Thm.\ 1]{Sondow}. For the convenience of the reader,
we include a full proof here.

\begin{lemma}\label{eq:norah-jones}
In the notation of \S\ref{sec:con-frac-notation},
for every $\alpha\in\RQ$ we have
\be\label{eq:type-character}
\tau(\alpha)=\mathop{\overline{\rm lim}}\limits_{n\to\infty}
\frac{\log k_{n+1}}{\log k_n}.
\ee
\end{lemma}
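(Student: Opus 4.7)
Let $L$ denote the right-hand side of \eqref{eq:type-character}. The plan is to combine \eqref{eq:ineqs} with Lemma~\ref{lem:too-close} to prove $L\le\tau(\alpha)$ and $\tau(\alpha)\le L$ separately. As a preliminary reduction, note that once $k_{n+1}\ge 2$ the right-hand inequality in \eqref{eq:ineqs} gives $k_n|\alpha-h_n/k_n|<1/2$, which forces $h_n$ to be the integer nearest to $k_n\alpha$; hence for all sufficiently large $n$,
\[
\frac{1}{k_n+k_{n+1}}<\nearint{k_n\alpha}<\frac{1}{k_{n+1}}.
\]
This two-sided estimate is essentially the only input the argument uses.

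\textbf{Step 1 ($L\le\tau(\alpha)$).} For any $\theta<L$, I would fix some $\theta'\in(\theta,L)$. By definition of $\mathop{\overline{\rm lim}}$, there are infinitely many $n$ with $k_{n+1}>k_n^{\theta'}$, and for each such $n$ the displayed upper bound yields $k_n^\theta\nearint{k_n\alpha}<k_n^{\theta-\theta'}$, which tends to $0$ since $\theta-\theta'<0$ and $k_n\to\infty$. This forces $\mathop{\underline{\rm lim}}_{q\in\N}q^\theta\nearint{q\alpha}=0$, hence $\theta\le\tau(\alpha)$; letting $\theta\uparrow L$ yields $L\le\tau(\alpha)$ (with the case $L=\infty$ giving $\tau(\alpha)=\infty$ immediately).

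\textbf{Step 2 ($\tau(\alpha)\le L$).} Assume $L<\infty$ (otherwise nothing is to prove). For any $\theta>L$, pick $\epsilon>0$ with $\theta-\epsilon>\max(L,1)$, which is possible since $L\ge 1$ always. Then $k_{n+1}<k_n^{\theta-\epsilon}$ for all large $n$, and I will show $q^\theta\nearint{q\alpha}\to\infty$ as $q\to\infty$ by splitting on whether $q$ is a convergent denominator. If $q=k_n$ with $n$ large, the displayed lower bound gives $q^\theta\nearint{q\alpha}>k_n^\theta/(2k_n^{\theta-\epsilon})=k_n^\epsilon/2\to\infty$. Otherwise, Lemma~\ref{lem:too-close} forces $q\nearint{q\alpha}\ge 1/2$, whence $q^\theta\nearint{q\alpha}\ge q^{\theta-1}/2\to\infty$ as $\theta-1>0$. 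In either case $\mathop{\underline{\rm lim}}_{q\in\N}q^\theta\nearint{q\alpha}=\infty$, so $\theta\ge\tau(\alpha)$; letting $\theta\downarrow L$ completes the proof.

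\textbf{Main obstacle.} No single step is technically deep; the two genuine points of care are verifying the preliminary identity $\nearint{k_n\alpha}=k_n|\alpha-h_n/k_n|$ for large $n$ (so that \eqref{eq:ineqs} translates verbatim into a statement about the nearest-integer distance), and ensuring in Step~2 that the single choice of $\epsilon$ simultaneously controls the convergent denominators (needing $\theta-\epsilon>L$) and the non-denominators (needing $\theta-\epsilon>1$). Both are handled uniformly by the inequality $L\ge 1$.
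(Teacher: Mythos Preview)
Your proof is correct and follows essentially the same approach as the paper: both derive a two-sided estimate for $\nearint{k_n\alpha}$ from \eqref{eq:ineqs} (the paper packages it as $\tfrac12<k_n^{\theta_n}\nearint{k_n\alpha}<1$ with $\theta_n\defeq\log k_{n+1}/\log k_n$, you leave it as $1/(k_n+k_{n+1})<\nearint{k_n\alpha}<1/k_{n+1}$) and invoke Lemma~\ref{lem:too-close} to pass from general $q$ to convergent denominators. The only cosmetic difference is in Step~2, where you establish the stronger fact $q^\theta\nearint{q\alpha}\to\infty$ by an explicit dichotomy on~$q$, while the paper argues contrapositively by taking $\theta\in\cT(\alpha)$ with $\theta>1$ and deducing $\theta\le\Theta$.
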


\begin{proof}

By \eqref{eq:tau-defn} and the Dirichlet approximation theorem,
we have $\tau(\alpha)=\sup\cT(\alpha)$, where $\cT(\alpha)$ is the set
of real numbers defined by
$$
\cT(\alpha)\defeq\big\{\theta\ge 1:
\mathop{\underline{\rm lim}}\limits_{q\in\N}
~q^\theta\nearint{q\alpha}=0\big\}.
$$
We define
\be\label{eq:Theta-defn}
\forall\,n\ge 2:\quad\theta_n\defeq\frac{\log k_{n+1}}{\log k_n},
\mand
\Theta\defeq\mathop{\overline{\rm lim}}\limits_{n\to\infty}\theta_n.
\ee
To prove the lemma, we show that $\Theta=\sup\cT(\alpha)$.

For any $n\ge 2$, we have $k_{n+1}=k_n^{\theta_n}$,
hence from \eqref{eq:ineqs} it follows that
$$
\tfrac12k_n^{-\theta_n}<\big|k_n\alpha-h_n\big|<k_n^{-\theta_n}.
$$
Note that $|k_n\alpha-h_n|<\tfrac12$
since $k_n\ge 2$ and $\theta_n>1$, hence
$|k_n\alpha-h_n|=\nearint{k_n\alpha}$; this shows that
\be\label{eq:kn-nest}
\forall\,n\ge 2:\quad\tfrac{1}{2}<k_n^{\theta_n}\nearint{k_n\alpha}<1.
\ee

By \eqref{eq:Theta-defn}, for any $\eps>0$ there are infinitely
many $n\in\N$ such that $\theta_n>\Theta-\eps$. For any such
$n$, using the upper bound in \eqref{eq:kn-nest} we have
$$
k_n^{\Theta-2\eps}\nearint{k_n\alpha}
<k_n^{\theta_n-\eps}\nearint{k_n\alpha}<k_n^{-\eps}.
$$
Consequently,
$$
\mathop{\underline{\rm lim}}\limits_{q\in\N}
~q^{\Theta-2\eps}\nearint{q\alpha}=0,
$$
and so $\Theta-2\eps\in\cT(\alpha)$. Since $\eps>0$ is arbitrary,
we get that $\Theta\le\sup\cT(\alpha)$.

To finish the proof, we need to show that $\sup\cT(\alpha)\le\Theta$. 
Since $\Theta\ge 1$ (because $\theta_n>1$ for all~$n$),
there is nothing more to do in the case that $\sup\cT(\alpha)=1$.

From now on, suppose that $\sup\cT(\alpha)>1$. Let
$\theta\in\cT(\alpha)$ with $\theta>1$. Since
$$
\mathop{\underline{\rm lim}}\limits_{q\in\N}
~q^{\theta}\nearint{q\alpha}=0,
$$
we have $q^\theta\nearint{q\alpha}<\tfrac12$ for infinitely
many $q\in\N$. By Lemma~\ref{lem:too-close}, any such~$q$ has the form
$q=k_n$ for some $n$, and thus
$$
k_n^\theta\nearint{k_n\alpha}<\tfrac12
$$
for infinitely many $n$. Now \eqref{eq:kn-nest} implies that
$\theta<\theta_n$ for infinitely many $n$. 
By~\eqref{eq:Theta-defn}, one has $\theta_n<\Theta+\eps$ for
all sufficiently large $n$; therefore, $\theta<\Theta+\eps$. Since
this is true for any $\theta\in\cT(\alpha)$ with $\theta>1$, 
it follows that $\sup\cT(\alpha)\le\Theta+\eps$. Finally, as $\eps>0$
is arbitrary, we get that $\sup\cT(\alpha)\le\Theta$ as required.
\end{proof}

\subsection{Preservation of type under the action of ${\rm GL}_2(\Q)$}
\label{sec:GL2Q}

Let
\be\label{eq:tau-defn2}
\cT(\alpha)\defeq\big\{\theta\ge 1:
\mathop{\underline{\rm lim}}\limits_{q\in\N}
~q^\theta\nearint{q\alpha}=0\big\}
\ee
as in the proof of Lemma~\ref{eq:norah-jones}. Then
$\tau(\alpha)=\sup\cT(\alpha)$ for each $\alpha\in\RQ$.
The following lemma gives some invariance properties of the set $\cT(\alpha)$.

\begin{lemma}\label{lem:elem-opns}
For any irrational number $\alpha$, we have
\begin{itemize}
\item[$(i)$] $\cT(-\alpha)=\cT(\alpha)$;
\item[$(ii)$] $\cT(r\alpha)=\cT(\alpha)$ for all
$r\in\Q_+^\times\defeq\Q\cap(0,\infty)$;
\item[$(iii)$] $\cT(\alpha+1)=\cT(\alpha)$;
\item[$(iv)$] $\cT(\alpha^{-1})=\cT(\alpha)$.
\end{itemize}
\end{lemma}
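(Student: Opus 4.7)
The plan is to reduce everything to two elementary facts about the distance to the nearest integer: $\nearint{z+n}=\nearint{z}$ for $n\in\Z$, and $\nearint{nz}\le|n|\,\nearint{z}$ for $n\in\Z$. Parts $(i)$ and $(iii)$ follow immediately from these: $\nearint{q(-\alpha)}=\nearint{-q\alpha}=\nearint{q\alpha}$ and $\nearint{q(\alpha+1)}=\nearint{q\alpha+q}=\nearint{q\alpha}$, so the defining liminf in \eqref{eq:tau-defn2} coincides for $\alpha$, $-\alpha$, and $\alpha+1$.

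For $(ii)$, write $r=a/b$ with $a,b\in\N_1$. Given $\theta\in\cT(\alpha)$, pick a sequence $q_j\to\infty$ with $q_j^{\theta}\nearint{q_j\alpha}\to 0$ and set $Q_j\defeq bq_j$. Since $Q_j r\alpha=aq_j\alpha$, the second elementary fact yields $\nearint{Q_j r\alpha}\le a\,\nearint{q_j\alpha}$, whence $Q_j^{\theta}\nearint{Q_j r\alpha}\le ab^{\theta}\cdot q_j^{\theta}\nearint{q_j\alpha}\to 0$, so $\theta\in\cT(r\alpha)$. Applying the same argument with $(r\alpha,1/r)$ in place of $(\alpha,r)$ yields the reverse inclusion.

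Part $(iv)$ is the main obstacle. Using $(i)$ we may assume $\alpha>0$, and we set $\beta\defeq\alpha^{-1}$. Fix $\theta\in\cT(\beta)$ and a sequence $q_j\to\infty$ with $q_j^{\theta}\nearint{q_j\beta}\to 0$. Let $n_j\in\Z$ be the integer nearest $q_j\beta$, so that $|q_j\beta-n_j|=\nearint{q_j\beta}$; multiplying through by $\alpha$ gives $|q_j-n_j\alpha|=\alpha\,\nearint{q_j\beta}$, which is eventually smaller than $1/2$, so $q_j$ is the integer nearest to $n_j\alpha$ and the exact identity $\nearint{n_j\alpha}=\alpha\,\nearint{q_j\beta}$ holds. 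Since $\beta>0$ and $q_j\to\infty$, we have $n_j=q_j\beta+O(1)\to\infty$ and $n_j/q_j\to\beta$; consequently
\[
n_j^{\theta}\nearint{n_j\alpha}=\alpha\,(n_j/q_j)^{\theta}\cdot q_j^{\theta}\nearint{q_j\beta}\longrightarrow\alpha\beta^{\theta}\cdot 0=0,
\]
so $\theta\in\cT(\alpha)$. Swapping the roles of $\alpha$ and $\alpha^{-1}$ gives the reverse inclusion.

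The only delicate point is verifying that the identity $\nearint{n_j\alpha}=\alpha\,\nearint{q_j\beta}$ used in $(iv)$ is exact rather than a mere inequality: this requires controlling $|q_j-n_j\alpha|$ below $1/2$ for large $j$, which follows from $q_j^{\theta}\nearint{q_j\beta}\to 0$ together with $\theta\ge 1$ and $\alpha>0$. Once this exact identity is in place, the asymptotic $n_j/q_j\to\beta$ transfers the liminf through verbatim, and no additional number-theoretic machinery is needed.
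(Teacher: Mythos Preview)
Your proof is correct. Parts $(i)$--$(iii)$ are handled exactly as in the paper: the identities $\nearint{-q\alpha}=\nearint{q\alpha}$ and $\nearint{q\alpha+q}=\nearint{q\alpha}$ dispose of $(i)$ and $(iii)$, and for $(ii)$ both you and the paper push the approximation along the subsequence $Q_j=bq_j$ (the paper writes $r=u/v$ and uses $vq$), using $\nearint{a q_j\alpha}\le a\,\nearint{q_j\alpha}$; the reverse inclusion comes by applying the same bound with $r^{-1}$.

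Part $(iv)$ is where your argument genuinely diverges from the paper's. The paper first invokes Lemma~\ref{lem:too-close} to recognise that any $q$ with $q^\theta\nearint{q\alpha}<\tfrac12$ must be a convergent denominator $k_n$, and then passes from $k_n$ to the numerator $h_n$ using the convergent relation $|k_n\alpha-h_n|=\nearint{k_n\alpha}$ together with $h_n/k_n\to\alpha$. Your route is more elementary: you simply take $n_j$ to be the integer nearest $q_j\beta$, use $\nearint{q_j\beta}\to 0$ (immediate from $q_j^\theta\nearint{q_j\beta}\to 0$ and $q_j\ge 1$) to force $|q_j-n_j\alpha|=\alpha\,\nearint{q_j\beta}<\tfrac12$ eventually, and read off the exact identity $\nearint{n_j\alpha}=\alpha\,\nearint{q_j\beta}$. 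The asymptotic $n_j/q_j\to\beta$ then transfers the liminf directly. This avoids any appeal to continued fraction theory, at the modest cost of checking that $n_j\to\infty$ (which you do). Either approach works; yours is self-contained, while the paper's fits naturally with the convergent machinery already developed for Lemma~\ref{eq:norah-jones}.
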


\begin{proof}
Properties $(i)$ and $(iii)$ follow simply from
\eqref{eq:tau-defn2} since $\nearint{q\alpha}=\nearint{-q\alpha}$
and $\nearint{q\alpha}=\nearint{q(\alpha+1)}$
for all $q\in\Z$.

To prove property~$(ii)$, it suffices to show that
\be\label{eq:oneway}
\forall\,\alpha\in\RQ,~~\forall\,r\in\Q_+^\times:\quad
\cT(\alpha)\subset\cT(r\alpha),
\ee
since \eqref{eq:oneway} implies the opposite inclusion
$$
\forall\,\alpha\in\RQ,~~\forall\,r\in\Q_+^\times:\quad
\cT(r\alpha)\subset\cT(r^{-1}\cdot r\alpha)=\cT(\alpha).
$$
To prove \eqref{eq:oneway}, let $\theta\ge 1$ be an arbitrary
element of $\cT(\alpha)$, and write $r=u/v$ with
some integers $u,v\in\N_1$. For any given $\eps>0$, let
$\delta\in(0,\frac12)$ be such that $uv^\theta\delta<\eps$.
Since $\theta\in\cT(\alpha)$, the inequality
$q^\theta\nearint{q\alpha}<\delta$ holds for infinitely many $q\in\N_1$.
For any such $q$, there is an integer $p$ (the one closest to $q\alpha$)
for which
$$
q^\theta|q\alpha-p|<\delta.
$$
Multiplying both sides of the inequality by $rv^{\theta+1}=uv^\theta$, we get
$$
(vq)^\theta|vqr\alpha-up|<uv^\theta\delta
\quad\Longrightarrow\quad
(vq)^\theta\nearint{vq(r\alpha)}<\eps.
$$
Since this holds for infinitely many $q\in\N_1$,
it is clear that $\theta$ belongs to $\cT(r\alpha)$, and $(ii)$ is proved.

To prove property~$(iv)$, it is enough to show
\be\label{eq:oneway2}
\forall\,\alpha\in\RQ:\quad
\cT(\alpha)\subset\cT(\alpha^{-1}).
\ee
Fix $\alpha\in\RQ$. By $(i)$, we can assume $\alpha>0$.
Let $(\frac{h_0}{k_0},\frac{h_1}{k_1},\frac{h_2}{k_2},\ldots)$ be the
sequence of convergents to $\alpha$, and note that every $h_n$ and $k_n$
is positive. Finally, let $\theta\ge 1$ be an arbitrary
element of $\cT(\alpha)$. For any given $\eps>0$, let
$\delta\in(0,\frac12)$ be such that
$$
\forall\,n\in\N:\quad
\Big(\frac{h_n}{k_n}\Big)^\theta
\frac{\delta}{\alpha}<\eps;
$$
the existence of $\delta$ is a consequence of \eqref{eq:limit-convs}.
Since $\theta\in\cT(\alpha)$, we have
$q^\theta\nearint{q\alpha}<\delta$ for infinitely many $q\in\N_1$.
For any such $q$, Lemma~\ref{lem:too-close} shows that $q=k_n$
for some $n\in\N$.
Moreover, as in the proof of Lemma~\ref{eq:norah-jones}, 
we have $|k_n\alpha-h_n|=\nearint{k_n\alpha}$ for all large $n$. Therefore,
$$
k_n^\theta|k_n\alpha-h_n|\le k_n^\theta\nearint{k_n\alpha}<\delta,
$$
from which we deduce that
$$
h_n^\theta\nearint{h_n\alpha^{-1}}
\le h_n^\theta|h_n\alpha^{-1}-k_n|
= \frac{h_n^\theta}{\alpha k_n^\theta}
\cdot k_n^\theta|k_n\alpha-h_n|
<\Big(\frac{h_n}{k_n}\Big)^\theta
\frac{\delta}{\alpha}<\eps.
$$
Since $\eps$ is arbitrary, and this holds for infinitely many $n$,
it follows that $\theta$ belongs to $\cT(\alpha^{-1})$, completing
the proof of $(iv)$.
\end{proof}

\bigskip

\begin{proof}[Proof of Theorem~\ref{thm:main-GL2}]
Let $G\defeq GL_2(\Q)$, and recall the action of $G$ on $\RQ$:
$$
g \alpha\defeq\frac{a\alpha+b}{c\alpha+d}
\quad\text{for any~}g=\begin{pmatrix}a&b\\c&d\end{pmatrix}\in G
\text{~and~}\alpha\in\RQ.
$$
We need to show that $\tau(g\alpha)=\tau(\alpha)$ for
all $\alpha\in\RQ$ and $g\in G$.

Let $H$ be the subgroup of $G$ consisting
of matrices $h$ such that $\cT(h\alpha)=\cT(\alpha)$ \emph{for all}
$\alpha\in\RQ$. To prove the theorem, we must show that $H=G$.
Note that the properties $(i)$, $(ii)$, $(iii)$, and $(iv)$ 
in Lemma~\ref{lem:elem-opns} imply that $H$ contains all of the matrices
$$
\begin{pmatrix}-1&\\&1\end{pmatrix},\quad
\begin{pmatrix}r&\\&1\end{pmatrix}\quad
\text{for all~}r\in\Q_+^\times,\quad
\begin{pmatrix}1&1\\&1\end{pmatrix},
\quad\text{and}\quad
w\defeq\begin{pmatrix}&1\\1&\end{pmatrix}.
$$

Under the action of $G$ on $\RQ$ by M\"obius transformations,
the center
$$
Z\defeq\left\{\begin{pmatrix}z&\\&z\end{pmatrix}:
z\in\Q\setminus\{0\}\right\}
$$
acts trivially; hence $Z\subset H$. The
set $T$ of diagonal matrices is also contained in~$H$,
since any diagonal matrix can be decomposed as
$$
\begin{pmatrix}a&\\&b\end{pmatrix}
=\begin{pmatrix}\pm 1&\\&1\end{pmatrix}
\begin{pmatrix}|a/b|&\\&1\end{pmatrix}
\begin{pmatrix}b&\\&b\end{pmatrix}
$$
for some choice of sign. Next, taking into account that
$$
\forall\,x\in\Q\setminus\{0\}:\quad
\begin{pmatrix}1&x\\&1\end{pmatrix}
=\begin{pmatrix}x&\\&1\end{pmatrix}
\begin{pmatrix}1&1\\&1\end{pmatrix}
\begin{pmatrix}x^{-1}&\\&1\end{pmatrix},
$$
we see that the collection of unipotent matrices
$$
N\defeq\left\{\begin{pmatrix}1&x\\&1\end{pmatrix}:x\in\Q\right\}
$$
is contained in $H$. 
To summarize, we have seen that $w\in H$, $T\subset H$, and $N\subset H$.
By the Bruhat decomposition (see, for example, Borel~\cite[\S14.12]{Borel}),
we have $G=TN\cup TNwN$,
and so we conclude that $H=G$.
\end{proof}

\bigskip

\begin{proof}[First proof of Theorem~\ref{thm:densely-onto}]
The map $\tau:\RQ\to[1,\infty]$ is surjective. Indeed,
for any given $\ttt\in[1,\infty]$, using
I changed this because now we have $x<y$ when $(x,y)$ lies in $V$
the initial vertex $v\defeq(1,2)$ in Proposition~\ref{prop:happy-trails}
and its proof, we construct a sequence $a=(a_n)_{n\in\N}$
of positive integers and a trail
$$
{\tt T}={\tt Tr}(v,a):\quad
v\squig{a_0}
v_0\squig{a_1} v_1
\squig{a_2} v_2
\squig{a_3} v_3
\squig{a_4} \cdots
$$
that has an $\ell$-limit equal to $\ttt$. Then
$\alpha\defeq[a_0;a_1,a_2,\ldots]$ is an irrational number
for which $\tau(\alpha)=\ell({\tt T})=\ttt$. This proves the
surjectivity of the map $\tau$.
Moreover, for any $x\in\Q$ we have
$\tau(\alpha+x)=\tau(\alpha)=\ttt$ by Theorem~\ref{thm:main-GL2}, since
$$
\alpha+x=g\alpha\qquad\text{with}\quad
g\defeq\begin{pmatrix}1&x\\&1\end{pmatrix}\in G.
$$
Therefore, $\alpha+\Q\subset\tau^{-1}(\{\ttt\})$, which implies that $\tau^{-1}(\{\ttt\})$
is dense in $\RQ$.
\end{proof}

\subsection{Trails of an irrational number}
\label{sec:fraction-trails}

For any $\alpha\in\RQ$, let the sequences $(a_n)_{n\in\N}$ and
$(k_n)_{n\in\N}$ be defined as in \S\ref{sec:con-frac-notation}. Defining
$$
\forall\,n\in\N_1:\quad
v_n\defeq(k_n,k_{n+1}),
$$
we obtain a sequence of vertices in the oriented graph $\cG$ 
described in \S\ref{sec:edges-and-trails}; in other words,
$(v_n)_{n\in\N_1}\subset V$. Moreover, by \eqref{eq:squig-games}
and the recursive definition \eqref{eq:kn-recursion},
each pair of consecutive vertices is connected by a
directed edge:
$$
\forall\,n\in\N_1:\quad v_n\squig{a_{n+2}} v_{n+1}.
$$
In this way, we obtain an infinite trail
$$
{\tt T}_\alpha\defeq{\tt Tr}\big(v_1,(a_n)_{n\ge 3}\big):\quad
v_1
\squig{a_3} v_2
\squig{a_4} v_3
\squig{a_5}  v_4 \cdots
$$
of the type described in \S\ref{sec:edges-and-trails}. Moreover,
using Lemma~\ref{eq:norah-jones} we have
$$
\overline\ell({\tt T}_\alpha)
=\mathop{\overline{\rm lim}}\limits_{n\to\infty}\ell(v_n)
=\mathop{\overline{\rm lim}}\limits_{n\to\infty}
\frac{\log k_{n+1}}{\log k_n}=\tau(\alpha).
$$
To sum up, every irrational number $\alpha$ determines an infinite
trail ${\tt T}_\alpha$ with an initial vertex $v_1=(a_1,a_2a_1+1)$
and an $\ell$-limsup $\overline\ell({\tt T}_\alpha)=\tau(\alpha)$.

Conversely, for any trail ${\tt T}_\alpha$ as above,
the sequences $(a_n)_{n\in\N}$ and $(k_n)_{n\in\N}$ are uniquely determined by
the vertices $(v_n)_{n\in\N_1}$, except for $a_0$. Thus, the irrational number
$\alpha$ is uniquely determined by its trail up to translation by an integer.

\subsection{Heads or tails?}
\label{sec:heads-or-tails}

Let $\alpha=[a_0;a_1,a_2,\ldots]\in\RQ$ be given.
For every $n\in\N$, we define the \emph{$n$-th head of $\alpha$}
to be the rational number 
$$
\alpha_{\le n}\defeq[a_0;a_1,a_2,\ldots,a_n],
$$
which is just the $n$-th convergent $h_n/k_n$ to $\alpha$;
see \eqref{eq:badabean}. Because $\alpha$ is the limit of its convergents
(see \eqref{eq:limit-convs}), the various heads of an
irrational number determine its location in $\R$.
We also have the following result.

\begin{lemma}\label{lem:heading-out}
For any $\eps>0$, there exists $n_0=n_0(\eps)>0$ such that
$$
\forall\,\alpha,\beta\in\RQ,~~\forall\,n\ge n_0:\quad
\alpha_{\le n}=\beta_{\le n}
\quad\Longrightarrow\quad
|\alpha-\beta|<\eps.
$$
Also, for any $\alpha\in\RQ$ and $n\in\N$, $n\ge 2$,
there exists $\delta=\delta(\alpha,n)>0$ such that
$$
\forall\,\beta\in\RQ:\quad
|\alpha-\beta|<\delta
\quad\Longrightarrow\quad
\alpha_{\le n}=\beta_{\le n}.
$$
\end{lemma}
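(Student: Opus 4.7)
The plan is to address the two statements separately. For the first, I would exploit the approximation bound \eqref{eq:ineqs} together with the exponential (Fibonacci) growth of the denominators $k_n$. If $\alpha_{\le n}=\beta_{\le n}=p/q$ for some rational $p/q$ in lowest terms, then because each convergent $h_n/k_n$ is automatically in lowest terms (by the identity $h_nk_{n-1}-h_{n-1}k_n=(-1)^{n-1}$), we must have $q=k_n^{\alpha}=k_n^{\beta}$. Applying \eqref{eq:ineqs} separately to $\alpha$ and $\beta$ and combining via the triangle inequality yields $|\alpha-\beta|<2/q^2$. The recursion \eqref{eq:kn-recursion} with $a_j\ge 1$ for $j\ge 1$ forces $k_n\ge F_n$, where $F_n$ is the $n$-th Fibonacci number; in particular $k_n\to\infty$ at an exponential rate \emph{uniformly in $\alpha$}. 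It therefore suffices to choose $n_0$ so that $2F_{n_0}^{-2}<\eps$.

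For the second statement, I would argue by induction on $n\ge 0$, exploiting the recursive nature of the continued fraction algorithm. The base case $n=0$ reduces to the observation that, since $\alpha$ is irrational, $a_0=\fl\alpha$ is constant on the open interval $(\fl\alpha,\fl\alpha+1)$, so the choice $\delta\defeq\min(\alpha-\fl\alpha,\fl\alpha+1-\alpha)>0$ does the job. For the inductive step, set $\alpha^\ast\defeq 1/(\alpha-a_0)\in\RQ$, whose continued fraction expansion is $[a_1;a_2,a_3,\ldots]$; by the inductive hypothesis applied to $\alpha^\ast$ at level $n$, there is $\delta^\ast>0$ such that any $\gamma\in\RQ$ with $|\gamma-\alpha^\ast|<\delta^\ast$ satisfies $\gamma_{\le n}=\alpha^\ast_{\le n}$. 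Since the map $\beta\mapsto 1/(\beta-a_0)$ is continuous at $\alpha$, pick $\delta>0$ small enough that $|\beta-\alpha|<\delta$ forces both $\fl\beta=a_0$ and $|1/(\beta-a_0)-\alpha^\ast|<\delta^\ast$; the recursive identity $a_{j+1}(\beta)=a_j\bigl(1/(\beta-a_0(\beta))\bigr)$ then propagates the agreement of the first $n+1$ coefficients of $1/(\beta-a_0)$ and $\alpha^\ast$ into agreement of the first $n+2$ coefficients of $\beta$ and $\alpha$, giving $\beta_{\le n+1}=\alpha_{\le n+1}$ as desired.

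The main obstacle will be purely notational, namely the bookkeeping in the inductive step of Part~2: one must verify that the recursive identity $a_{j+1}(\beta)=a_j\bigl(1/(\beta-a_0(\beta))\bigr)$ indeed transforms the statement ``$\beta^\ast_{\le n}=\alpha^\ast_{\le n}$'' into ``$\beta_{\le n+1}=\alpha_{\le n+1}$''. All of the analytic content is already contained in the bounds \eqref{eq:ineqs} and the Fibonacci lower bound for $k_n$, both of which follow directly from the material compiled in \S\ref{sec:con-frac-notation}.
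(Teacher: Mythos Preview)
Your proof of the first statement is essentially the same as the paper's: triangle inequality plus \eqref{eq:ineqs} plus a uniform lower bound on $k_n$. You invoke the Fibonacci bound $k_n\ge F_n$, while the paper uses the cruder $k_n\ge n-1$; either suffices.

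For the second statement your argument is correct but follows a genuinely different route. The paper argues directly with convergents: it picks $\delta$ small enough that $\beta$ is trapped strictly between the consecutive convergents $h_n/k_n$ and $h_{n+1}/k_{n+1}$ of $\alpha$, then uses an AM--GM inequality together with Lemma~\ref{lem:too-close} (the best-approximation property) to force one of these two fractions to be a convergent of $\beta$, whence $\alpha_{\le n}=\beta_{\le n}$. Your inductive approach via the Gauss map $\beta\mapsto 1/(\beta-\lfloor\beta\rfloor)$ is more elementary---it avoids Lemma~\ref{lem:too-close} entirely and relies only on the local constancy of $\lfloor\cdot\rfloor$ at irrationals and continuity of $x\mapsto 1/(x-a_0)$---and as a bonus it yields the statement for all $n\ge 0$ rather than only $n\ge 2$. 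The paper's approach, by contrast, stays within the convergent formalism already developed and is shorter once Lemma~\ref{lem:too-close} is in hand. Both are standard; yours is closer in spirit to the usual proof that the map $\alpha\mapsto(a_n)_{n\in\N}$ is a homeomorphism $\RQ\to\Z\times\N_1^\omega$.
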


\begin{proof}
For any $\alpha,\beta\in\RQ$, if $\alpha_{\le n}=h_n/k_n=\beta_{\le n}$,
then by the triangle inequality and \eqref{eq:ineqs} one has
$$
|\alpha-\beta|\le\bigg|\alpha-\frac{h_n}{k_n}\bigg|
+\bigg|\beta-\frac{h_n}{k_n}\bigg|<\frac{2}{k_nk_{n+1}}.
$$
On the other hand, by \eqref{eq:kn-recursion}, the lower bound $k_n\ge n-1$
holds for all $n\in\N$ regardless of the values of $\alpha$ and $\beta$.
This implies the first statement of the lemma.

Our proof of the second statement uses ideas from the proof
of \cite[Thm.~18]{Khin2}. Given $\alpha\in\RQ$ and $n\in\N$, $n\ge 2$,
the number $\alpha$ lies strictly between the consecutive convergents
$\alpha_{\le n}=h_n/k_n$ and $\alpha_{\le n+1}=h_{n+1}/k_{n+1}$.
Let $\delta>0$ be small enough so that
$$
\delta<\min\bigg\{\bigg|\alpha-\frac{h_n}{k_n}\bigg|,
\bigg|\alpha-\frac{h_{n+1}}{k_{n+1}}\bigg|\bigg\}.
$$
For any $\beta\in\RQ$ for which $|\alpha-\beta|<\delta$, the number $\alpha$
lies closer to $\beta$ than it does to either of its convergents 
$h_n/k_n$ or $h_{n+1}/k_{n+1}$; therefore $\beta$ lies strictly between
$h_n/k_n$ and $h_{n+1}/k_{n+1}$. Consequently,
$$
\bigg|\beta-\frac{h_n}{k_n}\bigg|+
\bigg|\beta-\frac{h_{n+1}}{k_{n+1}}\bigg|
=\bigg|\frac{h_n}{k_n}-\frac{h_{n+1}}{k_{n+1}}\bigg|
=\frac{1}{k_nk_{n+1}}<\frac{1}{2k_n^2}+\frac{1}{2k_{n+1}^2},
$$
where we used the corollary to \cite[Thm.~2]{Khin2} to derive second equality,
and we used the arithmetic-geometric mean inequality in the last step.
Hence, either
$$
\bigg|\beta-\frac{h_n}{k_n}\bigg|<\frac{1}{2k_n^2}
\qquad\text{or}\qquad
\bigg|\beta-\frac{h_{n+1}}{k_{n+1}}\bigg|<\frac{1}{2k_{n+1}^2}.
$$
Lemma~\ref{lem:too-close} now shows that $h_n/k_n$ or $h_{n+1}/k_{n+1}$
is one of the convergents of $\beta$, and in either case we have
$\alpha_{\le n}=\beta_{\le n}$.
\end{proof}

\begin{proposition}\label{prop:homeomorphism}
The map $\phi:\RQ\to\Z\times\N_1^\omega$ given by
$$
\alpha=[a_0;a_1,a_2,\ldots]\mapsto\phi(\alpha)\defeq(a_n)_{n\in\N}
$$
is a homeomorphism. The inverse of $\phi$ is the map
$\psi:\Z\times\N_1^\omega\to\RQ$ given by
$$
a\mapsto\psi(a)\defeq\lim_{n\to\infty}\frac{h_n}{k_n},
$$
where for any given sequence $a=(a_n)$
one uses \eqref{eq:hn-recursion} and \eqref{eq:kn-recursion}
to define the corresponding sequence of convergents $(h_n/k_n)_{n\in\N}$.
\end{proposition}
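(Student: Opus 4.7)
The plan is to verify three things: that $\phi$ and $\psi$ are well-defined mutual inverses; that $\phi$ is continuous; and that $\psi$ is continuous.

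For the first point, the paper has already recalled (just after \eqref{eq:contd-frac1}) that every sequence $(a_0,a_1,a_2,\ldots) \in \Z\times\N_1^\omega$ yields a convergent simple continued fraction whose value is irrational, so $\psi$ lands in $\RQ$. Conversely, every $\alpha \in \RQ$ admits a unique simple continued fraction expansion with $a_0 \in \Z$ and $a_n \in \N_1$ for $n \geq 1$, obtained by iterating the Gauss-type recursion $a_n = \lfloor \alpha_n \rfloor$, $\alpha_{n+1} = (\alpha_n - a_n)^{-1}$ starting from $\alpha_0 = \alpha$ (see Khinchin~\cite[\S4]{Khin2}); this makes $\phi$ well-defined. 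The identities $\psi \circ \phi = \mathrm{id}$ and $\phi \circ \psi = \mathrm{id}$ then follow from \eqref{eq:limit-convs} and from uniqueness of the expansion, respectively.

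For continuity of $\phi$, fix $\alpha \in \RQ$ and a basic open neighborhood $U$ of $\phi(\alpha)$ in the product topology; $U$ is determined by prescribing the first $n+1$ coordinates $a_0^\alpha,\ldots,a_n^\alpha$ for some integer $n$, which we may take to be at least $2$ (basic neighborhoods prescribing fewer coordinates contain ones that prescribe more). The second part of Lemma~\ref{lem:heading-out} supplies $\delta>0$ such that every $\beta \in \RQ$ with $|\beta-\alpha|<\delta$ satisfies $\beta_{\le n} = \alpha_{\le n}$, i.e., $[a_0^\beta;a_1^\beta,\ldots,a_n^\beta] = [a_0^\alpha;a_1^\alpha,\ldots,a_n^\alpha]$ as rationals. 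A standard induction, whose base case is $a_0 = \lfloor [a_0;a_1,\ldots,a_n]\rfloor$ (valid because $0<1/[a_1;a_2,\ldots,a_n]<1$ when all $a_i \geq 1$ and $n \geq 1$), shows that a length-$(n+1)$ simple continued fraction with $a_i \geq 1$ for $1 \leq i \leq n$ is uniquely determined by its value. Hence $a_i^\beta = a_i^\alpha$ for all $i \leq n$, and $\phi(\beta) \in U$.

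For continuity of $\psi$, fix $a \in \Z \times \N_1^\omega$ and $\eps>0$. The first part of Lemma~\ref{lem:heading-out} produces $n_0$ such that any two irrationals agreeing at their $n_0$-th heads differ by less than $\eps$. Every $b$ in the basic cylinder $\{b' \in \Z\times\N_1^\omega : b'_i = a_i \text{ for } 0 \leq i \leq n_0\}$ then has $\psi(b)_{\le n_0} = [b_0;b_1,\ldots,b_{n_0}] = [a_0;a_1,\ldots,a_{n_0}] = \psi(a)_{\le n_0}$, hence $|\psi(b)-\psi(a)|<\eps$. The only subtle point in the whole argument is the dictionary between equality of heads (as rationals) and equality of the underlying tuples of partial quotients, which is handled by the uniqueness statement invoked in the previous paragraph.
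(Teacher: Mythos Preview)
Your proof is correct and follows essentially the same approach as the paper's: both directions of continuity are reduced to the two halves of Lemma~\ref{lem:heading-out}, and the inverse relationship between $\phi$ and $\psi$ is read off from \eqref{eq:badabean} and \eqref{eq:limit-convs}. You are in fact slightly more explicit than the paper about the passage from equality of heads $\alpha_{\le n}=\beta_{\le n}$ (as rational numbers) to equality of the tuples $(a_0^\alpha,\ldots,a_n^\alpha)=(a_0^\beta,\ldots,a_n^\beta)$, which the paper uses without comment; one small caveat is that your floor-based induction needs a separate observation at the very last step (when the remaining fraction has length one or two and the final partial quotient may equal $1$), but the conclusion is standard and correct.
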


\begin{proof}
Since
$$
\alpha_{\le n}=[a_0;a_1,a_2,\ldots,a_n]=\frac{h_n}{k_n},
$$
for all $n\in\N$, one sees that $\phi$ and $\psi$ are inverse maps.

To show that $\phi$ is continuous, it is enough to show that $\phi^{-1}(\cA)$
is open for every basic open set of the form
\be\label{eq:basicAdefn}
\cA\defeq\{(a_0,a_1,\ldots,a_n)\}\times\N_1^\omega,
\ee
where $n\ge 2$ and $(a_0,a_1,\ldots,a_n)\in\Z\times\N_1^n$.
Writing $H\defeq[a_0;a_1,\ldots,a_n]$, the set $\phi^{-1}(\cA)$ consists
of those irrational numbers $\alpha$ for which $\alpha_{\le n}=H$. 
For any fixed $\alpha\in\phi^{-1}(\cA)$, let $\delta=\delta(\alpha,n)>0$ have the
property stated in Lemma~\ref{lem:heading-out}. The set
$$
\cB\defeq\big\{\beta\in\RQ:|\alpha-\beta|<\delta\big\}
$$
is open in $\RQ$ and contains $\alpha$, and by Lemma~\ref{lem:heading-out},
$\beta_{\le n}=\alpha_{\le n}$ for every $\beta\in\cB$, i.e.,
$\beta_{\le n}=H$ for every $\beta\in\cB$. This shows that
$\cB\subset\phi^{-1}(\cA)$, and 
we deduce that $\phi^{-1}(\cA)$ is open.

Next, we show that $\psi$ is continuous. For this, it suffices to show that
$\psi^{-1}(\cB)$ is open for every basic open set of the form
$$
\cB\defeq (\RQ)\cap\cI,
$$
where $\cI\subset\R$ is an open interval. For any fixed $a\in\psi^{-1}(\cB)$,
put $\alpha\defeq\psi(a)$, which belongs to $\cB$. Let $\eps>0$ be small enough
so that every $\beta\in\RQ$ satisfying $|\alpha-\beta|<\eps$ in contained in $\cB$.
Let $n_0=n_0(\eps)>0$ have the property stated in Lemma~\ref{lem:heading-out}.
Finally, let $n\ge n_0$, put $H\defeq \alpha_{\le n}=[a_0;a_1,\ldots,a_n]$, and
define $\cA$ as in  \eqref{eq:basicAdefn}. Then for every $a\in\cA$ we have
$$
\psi(a)_{\le n}=H=\alpha_{\le n}
\quad\Longrightarrow\quad
\big|\alpha-\psi(a)\big|<\eps
\quad\Longrightarrow\quad
\psi(a)\in\cB,
$$
and thus $\cA\subset\psi^{-1}(\cB)$. We conclude that $\psi^{-1}(\cB)$ is open.
\end{proof}

\bigskip

Next, for every $n\in\N$, let $\alpha_{\ge n}\in\RQ$ be the
\emph{$n$-th tail of $\alpha$}:
$$
\alpha_{\ge n}\defeq[a_n;a_{n+1},a_{n+2},\ldots],
$$
which is also an irrational number. Our next result shows that the type of $\alpha$
is determined by any one of its tails.

\begin{theorem}\label{thm:tails}
We have $\tau(\alpha_{\ge n})=\tau(\alpha)$ for all $\alpha\in\RQ$ and $n\in\N$.
\end{theorem}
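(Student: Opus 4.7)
The plan is to deduce Theorem~\ref{thm:tails} from Theorem~\ref{thm:main-GL2} by exhibiting each tail $\alpha_{\ge n}$ as the image of $\alpha$ under an element of $GL_2(\Q)$. First I would reduce to the case $n=1$ by a trivial induction on $n$: the case $n=0$ holds because $\alpha_{\ge 0}=\alpha$, and the identity $\alpha_{\ge n+1}=(\alpha_{\ge n})_{\ge 1}$ lets one apply the $n=1$ statement to the irrational $\alpha_{\ge n}$ and advance the induction. So the whole theorem follows at once from the base case applied to every irrational.

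For the base case, the defining relation of the simple continued fraction gives $\alpha = a_0 + 1/\alpha_{\ge 1}$, and therefore $\alpha_{\ge 1} = (\alpha - a_0)^{-1}$. In the notation of \eqref{eq:Mobius}, this means $\alpha_{\ge 1} = g\alpha$ for the M\"obius action of
\[
g \defeq \begin{pmatrix} 0 & 1 \\ 1 & -a_0 \end{pmatrix},
\]
which lies in $GL_2(\Z) \subset GL_2(\Q)$ since $\det g = -1$. Theorem~\ref{thm:main-GL2} then yields $\tau(\alpha_{\ge 1}) = \tau(g\alpha) = \tau(\alpha)$, as desired.

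The main obstacle is essentially already overcome: all of the work is hidden in Theorem~\ref{thm:main-GL2}, whose proof (via the Bruhat decomposition of $GL_2(\Q)$ and the elementary invariance properties in Lemma~\ref{lem:elem-opns}) supplies the $GL_2(\Q)$-equivariance of $\tau$. The only small point requiring a sentence of justification is the identity $\alpha = a_0 + 1/\alpha_{\ge 1}$, which follows directly from unwinding the definition of the continued fraction $[a_0;a_1,a_2,\ldots]$ (indeed, it is implicit in Proposition~\ref{prop:homeomorphism}, since the convergents of $\alpha_{\ge 1}$ are the tails of those of $\alpha$).
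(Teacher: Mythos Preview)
Your proposal is correct and essentially identical to the paper's own proof: the paper also reduces to the case $n=1$ via the relation $\alpha_{\ge n+1}=(\alpha_{\ge n})_{\ge 1}$, writes $\alpha_{\ge 1}=g\alpha$ for the same matrix $g=\begin{pmatrix}0&1\\1&-a_0\end{pmatrix}\in GL_2(\Q)$, and then invokes Theorem~\ref{thm:main-GL2}.
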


\begin{proof}
For every $\alpha\in\RQ$ we have
$$
\alpha_{\ge 0}=\alpha
\mand
\forall\,n\in\N:\quad
\alpha_{\ge n+1}=(\alpha_{\ge n})_{\ge 1}.
$$
Hence, if we demonstrate that
\be\label{eq:neil-young}
\forall\,\alpha\in\RQ:\quad
\tau(\alpha_{\ge 1})=\tau(\alpha),
\ee
then the general result follows via an inductive argument.

To prove \eqref{eq:neil-young}, observe that
$$
\alpha=a_0+\frac{1}{a_1+\frac{1}{a_2+\cdots}}
=a_0+\frac{1}{\alpha_{\ge 1}},
$$
and therefore (as in \eqref{eq:Mobius}) we have
$$
\alpha_{\ge 1}=\frac{1}{\alpha-a_0}=g\alpha
\qquad\text{with}\quad
g\defeq\begin{pmatrix}&1\\1&-a_0\end{pmatrix}\in GL_2(\Q).
$$
Now the relation \eqref{eq:neil-young} follows immediately
from Theorem~\ref{thm:main-GL2}.
\end{proof}

\begin{corollary}\label{cor:get-off-my-tail}
If $\alpha,\beta\in\RQ$ and $\alpha_{\ge m}=\beta_{\ge n}$
for some $m,n\in\N$, then $\alpha$~and~$\beta$ have the same type.
\end{corollary}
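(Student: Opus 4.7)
The plan is to derive this corollary as a direct consequence of Theorem~\ref{thm:tails}, which states that $\tau(\alpha_{\ge n}) = \tau(\alpha)$ for every $\alpha \in \RQ$ and every $n \in \N$. Once that theorem is in hand, the corollary is essentially a one-line verification, so the work in this section has already been done.

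Specifically, I would argue as follows. Given $\alpha, \beta \in \RQ$ and integers $m, n \in \N$ with $\alpha_{\ge m} = \beta_{\ge n}$, I would apply Theorem~\ref{thm:tails} twice. First, applied to $\alpha$ with the index $m$, it yields $\tau(\alpha) = \tau(\alpha_{\ge m})$. Second, applied to $\beta$ with the index $n$, it yields $\tau(\beta) = \tau(\beta_{\ge n})$. Combining these equalities with the hypothesis $\alpha_{\ge m} = \beta_{\ge n}$ produces the chain
\[
\tau(\alpha) \;=\; \tau(\alpha_{\ge m}) \;=\; \tau(\beta_{\ge n}) \;=\; \tau(\beta),
\]
which is exactly the conclusion.

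Since each link in this chain is immediate from Theorem~\ref{thm:tails} or from the assumption, there is no real obstacle here; the only ``content'' is the invariance of $\tau$ under passing to tails, which is already established. Thus the proof is complete once we display the above equalities and invoke the theorem.
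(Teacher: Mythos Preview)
Your proposal is correct and matches the paper's approach: the corollary is stated immediately after Theorem~\ref{thm:tails} with no separate proof, precisely because the chain $\tau(\alpha)=\tau(\alpha_{\ge m})=\tau(\beta_{\ge n})=\tau(\beta)$ you wrote down is the intended one-line argument.
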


\begin{remark*}
An old result of Serret
$($see Perron~{\rm\cite[Satz 23]{Perron}}$)$
asserts that two irrational numbers
$\alpha$ and $\beta$ are equivalent under the action
of $GL_2(\Z)$ $($by M\"obius transformations$)$
if and only if $\alpha_{\ge m}=\beta_{\ge n}$
for some $m,n\in\N$. Using this result, one can simplify
some of the arguments above. For example, since
$\alpha_{\ge n}$ and $\alpha$ are equivalent under
$GL_2(\Z)$, Theorem~\ref{thm:tails} can be proved by
combining Serret's result with Theorem~\ref{thm:main-GL2},
along with the observation that the scalar matrices in
$GL_2(\Q)$ act trivially.
\end{remark*}

\begin{remark*}
In descriptive set theory, for every set $\Omega$ there is
an important relation $E_t(\Omega)$ known as the
\emph{tail equivalence relation}
$($see, e.g., Dougherty et al~{\rm\cite[\S2]{DJK}}$)$,
which is defined on the collection of sequences $\Omega^\omega$ by
$$
xE_t(\Omega)y\quad\Longleftrightarrow\quad
\exists\,m,n\in\N,~\forall\,k\in\N:\quad x(m+k)=y(n+k).
$$ 
In terms of this relation, Proposition \ref{prop:homeomorphism} and
Corollary \ref{cor:get-off-my-tail} together show that if $a_0,b_0\in\Z$
and $(a_{n+1})_{n\in\N}E_t(\N_1)(b_{n+1})_{n\in\N}$, then
$\psi((a_n)_{n\in\N})$ and $\psi((b_n)_{n\in\N})$ have the same type.
\end{remark*}

\bigskip\begin{proof}[Second proof of Theorem~\ref{thm:densely-onto}]
We have already seen that the map $\tau$ is surjective.
Given $\ttt\in[1,\infty]$, let $\alpha\in\RQ$ be such that $\tau(\alpha)=\ttt$.

Now, let $\beta\in\RQ$ be arbitrary. For every $n\in\N$, let
$\gamma_n$ be the irrational number whose head is $\beta_{\le n}$
and whose tail is $\alpha_{\ge n+1}$, i.e.,
$$
\gamma_n\defeq[b_0;b_1,\ldots,b_n,a_{n+1},a_{n+1},\ldots].
$$
Since $(\gamma_n)_{\le n}=\beta_{\le n}$
for each $n$, Lemma~\ref{lem:heading-out} implies that
$\gamma_n\to\beta$ as $n\to\infty$. On the other hand,
since $(\gamma_n)_{\ge n+1}=\alpha_{\ge n+1}$ for each $n$,
Corollary~\ref{cor:get-off-my-tail} shows that
$\tau(\gamma_n)=\tau(\alpha)=\ttt$; thus, every number
$\gamma_n$ lies in $\tau^{-1}(\{\ttt\})$.
\end{proof}

\newpage{\large\section{Examples of complete sets}
\label{sec:various-complete-sets}}

Our proof of Theorem~\ref{thm:main2} (see \S\ref{sec:complexity} below)
ultimately relies on the fact that
$$
\cN_\infty\defeq\big\{\beta\in\N_1^\omega:\lim\limits_{n\to\infty}\beta(n)=\infty\big\}
$$
is a complete $\borelp{0}{3}$, a result that is well-known in descriptive
set theory. For the convenience of the reader, we include a proof here,
which borrows extensively from the material presented in Kechris~\cite[\S23.A]{K}.

\begin{lemma}\label{lem:Cantor-pi02}
Let $2^\omega$ be the Cantor space of infinite binary sequences.
The set
$$
\cL\defeq\{f\in 2^\omega:f(n)=0
\text{~for all but finitely many $n\in\N$}\}.
$$
is a complete $\borels{0}{2}$.
\end{lemma}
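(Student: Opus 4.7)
The plan is to invoke Wadge's theorem, as recalled in \S\ref{sec:borelhierarchy} via \cite[Thm.~22.10]{K}: since $2^\omega$ is a zero-dimensional Polish space, it suffices to show that $\cL\in\borels{0}{2}(2^\omega)\setminus\borelp{0}{2}(2^\omega)$.

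The easy half---showing $\cL\in\borels{0}{2}$---is immediate: set $\cF_N\defeq\{f\in 2^\omega:f(n)=0\text{ for all }n\ge N\}$, which is clopen (hence closed) in $2^\omega$. Then $\cL=\bigcup_{N\in\N}\cF_N$ exhibits $\cL$ as an $F_\sigma$.

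The substantive step is to rule out $\cL\in\borelp{0}{2}$, and here I would argue by contradiction via Baire category. Suppose $\cL$ were a $G_\delta$ subset of $2^\omega$; then $\cL$ with the subspace topology would be Polish, by \cite[Thm.~3.11]{K}. Now $\cL$ is manifestly countable, since its elements correspond bijectively to finite subsets of $\N$ (the supports). A nonempty countable Polish space must contain an isolated point: otherwise each singleton is nowhere dense and closed, and writing the space as a countable union of such singletons violates the Baire category theorem applied internally. I would then derive a contradiction by showing that $\cL$ has no isolated points in the subspace topology. Given any $f\in\cL$ and any basic clopen neighborhood of $f$ in $2^\omega$ determined by fixing coordinates $f(0),\ldots,f(N)$, I would choose some $M>N$ with $f(M)=0$ (available because $f$ is eventually zero) and define $g\in 2^\omega$ by $g(n)\defeq f(n)$ for $n\ne M$ and $g(M)\defeq 1$. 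Then $g\in\cL$, $g\ne f$, and $g$ agrees with $f$ on $\{0,\ldots,N\}$, so $g$ lies in the chosen neighborhood. This contradicts the existence of an isolated point.

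I do not anticipate any serious obstacle. The one subtlety worth checking carefully is the implication ``$\cL$ is $G_\delta$ in $2^\omega$ $\Rightarrow$ $\cL$ is Polish,'' which is a classical fact but must be cited correctly; and the countable Polish space argument, for which the cleanest formulation is simply: a nonempty perfect Polish space is uncountable, so a nonempty countable Polish space cannot be perfect and therefore contains an isolated point.
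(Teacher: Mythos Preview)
Your proposal is correct and follows the same overall strategy as the paper: verify $\cL\in\borels{0}{2}$, rule out $\cL\in\borelp{0}{2}$ via Baire category, and then invoke Wadge's theorem \cite[Thm.~22.10]{K}. The only difference is in how the Baire step is packaged. The paper observes that $\cL$ and $2^\omega\setminus\cL$ are both dense; since $\cL$ was already shown to be $F_\sigma$, its complement is a dense $G_\delta$, and if $\cL$ were also a dense $G_\delta$ then their (empty) intersection would be comeager, contradicting Baire. Your route---pass to $\cL$ as a Polish subspace via \cite[Thm.~3.11]{K}, note it is countable with no isolated points, and contradict the uncountability of nonempty perfect Polish spaces---is equally valid but slightly heavier, since it appeals to the $G_\delta$-subspace theorem and the perfect-set fact rather than using the $F_\sigma$ structure of $\cL$ you already established.
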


\begin{proof}
For each $n\in\N$, the set $U_n\defeq\{f\in 2^\omega:f(n)=0\}$
is clopen (closed and open), and therefore
$$
\cL=\bigcup\limits_{n_0\in\N}\bigcap\limits_{n\ge n_0}U_n
$$
is a $\borels{0}{2}$ subset of the Polish space $2^\omega$.
On the other hand, as both $\cL$ and $2^\omega\setminus\cL$
are dense in $2^\omega$, the set $\cL$ cannot be a $G_\delta$ by
Baire's theorem (see \cite[Thm.~8.4]{K}); in other words,
$\cL\not\in\borelp{0}{2}$. Now
Wadge's theorem (see \cite[Thm.~22.10]{K}) shows that $\cL$ is a
complete $\borels{0}{2}$.
\end{proof}

\begin{lemma}\label{lem:matrix-pi03}
Let $2^{\omega\times\omega}$ be the space of infinite binary matrices,
and let $\cM$ be the subset  consisting of
infinite binary matrices whose columns each have at most finitely many $1$'s.
Then $\cM$ is a complete $\borelp{0}{3}$.
\end{lemma}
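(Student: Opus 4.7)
The plan is to establish both that $\cM\in\borelp{0}{3}$ and that every $\borelp{0}{3}$ subset of a zero-dimensional Polish space Wadge-reduces to $\cM$; together these yield completeness (and also imply $\cM\notin\borels{0}{3}$, since $\borelp{0}{3}\ne\borels{0}{3}$ on any uncountable Polish space).

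For the upper bound, I would exploit the obvious product structure of $\cM$ over columns. For each $j\in\N$, the column projection $\pi_j:2^{\omega\times\omega}\to 2^\omega$ defined by $\pi_j(M)\defeq (M(i,j))_{i\in\N}$ is continuous (each of its coordinates is one of the product coordinates of $M$). By definition, $M\in\cM$ if and only if for every $j$ the $j$-th column $\pi_j(M)$ has only finitely many $1$'s, i.e.\ $\pi_j(M)\in\cL$. Hence
$$
\cM=\bigcap_{j\in\N}\pi_j^{-1}(\cL),
$$
and since each $\pi_j^{-1}(\cL)$ is $\borels{0}{2}$ by Lemma~\ref{lem:Cantor-pi02} and the continuity of $\pi_j$, $\cM$ is a countable intersection of $\borels{0}{2}$ sets, hence $\borelp{0}{3}$.

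For the hardness, let $\cX$ be a zero-dimensional Polish space and let $A\in\borelp{0}{3}(\cX)$. By definition I can write $A=\bigcap_{j\in\N}A_j$ with each $A_j\in\borels{0}{2}(\cX)$. Since $\cL$ is $\borels{0}{2}$-complete by Lemma~\ref{lem:Cantor-pi02}, for each $j$ there exists a continuous map $g_j:\cX\to 2^\omega$ with $g_j^{-1}(\cL)=A_j$. I would bundle the $g_j$'s into a single continuous map $f:\cX\to 2^{\omega\times\omega}$ by declaring the $j$-th column of $f(x)$ to be $g_j(x)$, i.e.
$$
f(x)(i,j)\defeq g_j(x)(i).
$$
Continuity of $f$ is immediate from the continuity of each $g_j$ together with the product topology on $2^{\omega\times\omega}$: each coordinate function $x\mapsto f(x)(i,j)$ is the $i$-th coordinate of $g_j$, hence continuous. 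Finally, $f(x)\in\cM$ iff every column of $f(x)$ lies in $\cL$, iff $g_j(x)\in\cL$ for every $j$, iff $x\in A_j$ for every $j$, iff $x\in A$; thus $f^{-1}(\cM)=A$ and $A\le_W\cM$.

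I do not expect a serious obstacle: the core content is already packaged in the $\borels{0}{2}$-completeness of $\cL$, and this lemma is really a template showing that a ``for every column, only finitely many $1$'s'' condition adds exactly one level to the Borel hierarchy. The only mild care needed is to note that $2^{\omega\times\omega}$ is a Polish (indeed Cantor) space under the obvious identification $\N\times\N\cong\N$, and that the defining decomposition $A=\bigcap_jA_j$ of a $\borelp{0}{3}$ set really does have each $A_j\in\borels{0}{2}$ so that Lemma~\ref{lem:Cantor-pi02} applies verbatim.
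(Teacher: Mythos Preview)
Your proof is correct and follows essentially the same approach as the paper: the upper bound is obtained by writing $\cM$ as a countable intersection of $\borels{0}{2}$ sets (one per column), and the hardness is obtained by decomposing an arbitrary $\borelp{0}{3}$ set as $\bigcap_j A_j$ with $A_j\in\borels{0}{2}$, applying Lemma~\ref{lem:Cantor-pi02} to each $A_j$, and bundling the resulting reductions into the columns of a single continuous map into $2^{\omega\times\omega}$. The only cosmetic difference is that the paper writes the upper bound directly as $\bigcap_m\bigcup_{n_0}\bigcap_{n\ge n_0}U_{m,n}$ rather than via the column projections $\pi_j^{-1}(\cL)$, and the roles of the two indices are swapped.
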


\begin{proof}
For fixed $m,n\in\N$, the set
$U_{m,n}\defeq\{f\in 2^{\omega\times\omega}:f(m,n)=0\}$
is clopen, and thus
$$
\cM=\bigcap\limits_{m\in\N}\bigcup\limits_{n_0\in\N}
\bigcap\limits_{n\ge n_0}U_{m,n}
$$
is a $\borelp{0}{3}$ subset of the Polish space 
$2^{\omega\times\omega}$.

Now let $\cX$ be an arbitrary zero-dimensional Polish space. For any
$\cA\in\borelp{0}{3}(\cX)$,
there is a family $(\cA_m)_{m\in\N}$ of $\borels{0}{2}$ 
subsets of $\cX$ such that $\cA=\bigcap_{m\in\N}\cA_m$.
Since each $\cA_m\le_W\cL$ by Lemma~\ref{lem:Cantor-pi02},
there is a continuous map $g_m:\cX\to 2^\omega$ for which
$\cA_m=g_m^{-1}(\cL)$. We define $g:\cX\to 2^{\omega\times\omega}$ by 
$$
\forall\,x\in\cX,~~\forall\,m,n\in\N:\quad g(x)(m,n)\defeq g_m(x)(n).
$$ 
Then $g$ is continuous, and
$$
x\in \cA=\bigcap_{m\in\N}g_m^{-1}(\cL)
\quad\Longleftrightarrow\quad\forall\,m\in\N:\quad g_m(x)\in\cL
\quad\Longleftrightarrow\quad g(x)\in \cM.
$$
Thus we have $\cA\le_W\cM$ as required.
\end{proof}

\begin{lemma}\label{lem:diverges-pi03}
The set $\cN_\infty\defeq\big\{\beta\in\N_1^\omega:\lim\limits_{n\to\infty}\beta(n)=\infty\big\}$ is a complete $\borelp{0}{3}$.
\end{lemma}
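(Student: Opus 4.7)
The plan is first to verify $\cN_\infty \in \borelp{0}{3}(\N_1^\omega)$ and then to Wadge-reduce the complete $\borelp{0}{3}$ set $\cM$ from Lemma~\ref{lem:matrix-pi03} into $\cN_\infty$ by a continuous map.

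For the upper bound on complexity, I would note that $\lim_n \beta(n)=\infty$ if and only if for every $M\in\N_1$ there exists $N\in\N$ such that $\beta(n)\ge M$ for all $n\ge N$. Since each set $U_{M,n}\defeq\{\beta\in\N_1^\omega:\beta(n)\ge M\}$ is clopen in the product topology, we have
$$
\cN_\infty=\bigcap_{M\in\N_1}\bigcup_{N\in\N}\bigcap_{n\ge N}U_{M,n},
$$
which displays $\cN_\infty$ as a $\borelp{0}{3}$ subset of the zero-dimensional Polish space $\N_1^\omega$.

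For completeness, the task reduces (by Wadge transitivity together with Lemma~\ref{lem:matrix-pi03}) to exhibiting a continuous map $\phi:2^{\omega\times\omega}\to\N_1^\omega$ with $\phi^{-1}(\cN_\infty)=\cM$. The idea is that the value $\phi(f)(n)$ should be small precisely when some early row of the $n$-th column of $f$ holds a $1$, so that frequent $1$'s in a fixed row force $\phi(f)$ to dip below a bound infinitely often. Concretely, I would set
$$
\phi(f)(n)\defeq\begin{cases}1+\min\bigl\{m\le n:f(m,n)=1\bigr\}&\text{if this set is nonempty,}\\ n+2&\text{otherwise.}\end{cases}
$$
Each coordinate $\phi(f)(n)$ depends only on the finitely many values $f(0,n),\ldots,f(n,n)$, so $\phi$ is continuous, and clearly $\phi(f)(n)\in\N_1$.

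It remains to check the reduction property in both directions. If $f\in\cM$, then for each $M\in\N_1$ the set $\bigcup_{m<M}\{n:f(m,n)=1\}$ is a finite union of finite sets; for every $n$ outside this set and with $n\ge M$ we have $\phi(f)(n)>M$, hence $\phi(f)\in\cN_\infty$. Conversely, if $f\notin\cM$, pick $m$ with $\{n:f(m,n)=1\}$ infinite; for every $n$ in that set with $n\ge m$ we get $\phi(f)(n)\le m+1$, so $\phi(f)\notin\cN_\infty$. The only delicate point, and the one to handle carefully, is ensuring both continuity and the correct asymptotic behavior simultaneously, which is why the cutoff $m\le n$ is built in and a safely increasing default value is used for the empty case; everything else is bookkeeping.
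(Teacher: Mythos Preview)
Your proof is correct and follows essentially the same strategy as the paper: first exhibit $\cN_\infty$ as a countable intersection of $\borels{0}{2}$ sets, then continuously reduce $\cM$ from Lemma~\ref{lem:matrix-pi03} to $\cN_\infty$. The only difference is the concrete reduction map---the paper encodes the matrix into a sequence via a pairing bijection $\langle m,n\rangle\ge m$ and sets the $k$-th value to $k$ or to $m$ according to whether the corresponding entry is $0$ or $1$, whereas you read off the minimal ``row'' index of a $1$ in each column with a cutoff; both constructions work for the same reason and neither is simpler or more general than the other.
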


\begin{proof}
For any $k\in\N$ the set $U_k\defeq\{\beta\in\N_1^\omega\mid\beta(n)\ge k\}$ 
is clopen, and therefore
$$
\cN_\infty=\bigcap\limits_{k\in\N}\bigcup\limits_{n_0\in\N}
\bigcap\limits_{n\ge n_0}U_k
$$
is a $\borelp{0}{3}$ subset of the Polish space $\N_1^\omega$.

Fix a bijection $\langle\cdot,\cdot\rangle:\N\times\N\to\N$
with the property that 
$\langle m,n\rangle\ge m$ for all $m,n\in\N$ (for instance,
one can take $\langle m,n\rangle\defeq 2^n(2m+1)-1$). We define a map 
$f:2^{\omega\times\omega}\to\N^\omega$ by 
$$
\forall\,\alpha\in 2^{\omega\times\omega},~~\forall\,k\in\N:\quad
f(\alpha)(k)\defeq\begin{cases}
k&\quad\hbox{if $k=\langle m,n\rangle$ and $\alpha(m,n)=0$},\\
m&\quad\hbox{if $k=\langle m,n\rangle$ and $\alpha(m,n)=1$}.\\
\end{cases}
$$ 
Then $f$ is continuous, and $\cM=f^{-1}(\cN_\infty)$.

Now let $\cX$ be an arbitrary zero-dimensional Polish space. For any
$\cA\in\borelp{0}{3}(\cX)$, Lemma~\ref{lem:matrix-pi03}
shows the existence of a continuous map
$g:\cX\to 2^{\omega\times\omega}$ such that
$\cA=g^{-1}(\cM)$. Then $f\circ g:\cX\to\N^\omega$ is continuous, and
$\cA=(f\circ g)^{-1}(\cN_\infty)$. Thus we have $\cA\le_W\cN_\infty$.
\end{proof}

\newpage{\large\section{Complete functions}
\label{sec:complete functions}}

The aim of this section is to prove Theorem~\ref{thm:main3}.

\subsection{Background on trees}
\label{sec:trees}
Let $A$ be a nonempty set. Following \cite[2.A]{K}, for each $n\in\N$ we denote
by $A^n$ the set of finite sequences $s=(s_0,\ldots,s_{n-1})$ of \emph{length}
${\tt len}(s)=n$. In particular, $A^0=\{\varnothing\}$, where $\varnothing$ is
the empty sequence. The set $A^\omega$ consists of infinite (countable) sequences
$s=(s_0,s_1\ldots)$ of length ${\tt len}(s)=\infty$. We put
$$
A^{<\omega}\defeq\bigcup_{n\in\N}A^n\mand A^{\le\omega}\defeq A^{<\omega}\cup A^\omega.
$$
Given $s\in A^{\le\omega}$ and $n\le{\tt len}(s)$
we denote $s|_n\defeq(s_0,\ldots,s_{n-1})$, where formally we have
$s|_0=\varnothing$, and $s|_\infty=s$ in the case that $s\in A^\omega$.
If $s\in A^{<\omega}$ and $t\in A^{\le\omega}$, we say that $s$ is an
\emph{initial segment} of $t$ and that $t$ is an extension of $s$ (and we
write $s\subseteq t$) if $s=t|_n$ with $n={\tt len}(s)$.

A \emph{tree} on $A$ is a subset $T\subset A^{<\omega}$ that is
\emph{closed under initial segments}, i.e., for any given $t\in T$,
every initial segment $s$ of $t$ is also contained in $T$.
The \emph{body} of a tree $T$ is the set
$$
[T]\defeq\big\{x\in A^\omega:x\vert_n\in T\text{~for all~}n\in\N\big\}.
$$ 
A tree $T$ is said to be \emph{pruned} if every element of $T$ has a proper
extension in $T$; equivalently, every element of $T$ is an initial segment
of some element of $[T]$.

For any sequence $s\in A^{<\omega}$ we denote
$$
N_s\defeq\big\{x\in A^\omega:s\subseteq x\big\}.
$$
Then $N_s$ is the basic clopen neighborhood of $A^\omega$ associated with $s$.
If $T$ is a tree on $A$ and $s\in T$, then we denote
$$
T_s\defeq\big\{t\in T:s\subseteq t\text{~or~}t\subseteq s\big\}.
$$

\subsection{A key lemma}
\label{sec:key-lime-pie}
 To establish the universality of the type function,
we use a generalization of the Lemma \ref{lem:targeting}, the crucial
``targeting lemma.'' Recall that
$$
V\defeq\big\{(x,y)\in\N_1^2:x<y\}.
$$
Let $\gB$ be a fixed subset of 
$$
W\defeq\big\{\big((x_1,y_1),(x_2,y_2)\big)\in V^2:x_2=y_1\big\},
$$
and put
$$
P=P_\gB\defeq\big\{(v_n)_{n\in\N_1}\in V^\omega:
(v_n,v_{n+1})\in \gB\text{~for all~}n\in\N_1\}.
$$
By \cite[Proposition 2.4]{K}, since $P$ is a closed subset of $V^\omega$,
there exists a pruned tree $T$ on $V$ with the property that $P=[T]$;
the tree is given by
$$
T=T_\gB\defeq\bigcup_{n\in\N}
\big\{ (v_1,\cdots ,v_n)\in V^n:(v_m,v_{m+1})\in \gB\text{~for~}1\le m<n\big\}.
$$
For considerations of density, for each $s\in T$ we use the notation $T_s$ defined
above, and we write $P_s\defeq P\cap N_s=[T_s]$.

\begin{definition*}
For fixed $\gB$ as above and $\gb\in\R\cup\{-\infty\}$,
we say that a map $\gell:V\to[\gb,\infty)$ is $(\gB,\gb)$-\emph{target-controlled}
if it has the following property. For any $\eps\in(0,1)$, there is a positive integer
$M=M_{\gB,\gb}(\eps)$ such that for every $\ttt\in[\gb,\infty)\cap\R$ and $v=(x,y)\in V$
with $y>M$, there exists a vertex $w\in V$ for which $(v,w)\in \gB$ and
$\gell(w)\in[\ttt,\ttt+\eps)$.
\end{definition*}

The prototype for this definition is the specific
function $\ell$ considered earlier in connection with
the type function $\tau$; see \eqref{eq:ell-defn} for the
definition.
In view of Lemma \ref{lem:targeting}, the map $\ell$
is $(B,1)$-target-controlled, where
$$
B\defeq\big\{\big((x_1,y_1),(x_2,y_2)\big)\in V^2:x_2=y_1
\text{~and~}y_2=ay_1+x_1\text{~for some~}a\in\N_1\big\}.
$$
Note that, in this case, every integer larger than
$(10/\eps)^{1/\eps}$ is an acceptable value of
$M_{B,1}(\eps)$ for any $\eps\in(0,1)$.

In the general case, where $\gell$ is an arbitrary
$(\gB,\gb)$-target-controlled map, we need the following key lemma.
For any $n\in\N_1$, let $\eps_n\defeq2^{-n}$, and let
$M_n$ be an acceptable value of $M_{\gB,\gb}(\eps_n)$.
We can assume that the sequence $(M_n)_{n\in\N_1}$
is strictly increasing.

\begin{lemma}\label{lem:tre}
Let $s\in T$ such that $s=\varnothing$ or ${\tt len}(s)>M_1$.
For any function $u:T_s\to [\gb,\infty)$, there exists
$v=(v_n)_{n\in\N_1}\in P_s$ such that 
$$
\mathop{\overline{\rm lim}}\limits_{m\in\N_1}u(v|_m)
=\mathop{\overline{\rm lim}}\limits_{n\in\N_1}\gell(v_n).
$$
\end{lemma}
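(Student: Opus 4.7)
The plan is to build $v=(v_n)_{n\in\N_1}\in P_s$ one coordinate at a time, using the $(\gB,\gb)$-target-controlled property of $\gell$ to force $\gell(v_{k+1})$ to lie within a shrinking window around $u(v|_k)$. Once the gap between corresponding terms of $\bigl(\gell(v_{k+1})\bigr)_k$ and $\bigl(u(v|_k)\bigr)_k$ tends to zero, the two sequences have the same $\overline{\lim}$, and a reindex on one of them completes the lemma.

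First I would record the growth property along trails in $P$: if $(v_k,v_{k+1})\in\gB\subseteq W$ with $v_k=(x_k,y_k)$ and $v_{k+1}=(x_{k+1},y_{k+1})$, then $x_{k+1}=y_k$, and the defining inequality $x_{k+1}<y_{k+1}$ of $V$ forces $y_{k+1}>y_k$. So the $y$-coordinates along any sequence in $T$ strictly increase (by at least $1$ per step), and in particular every $t\in T$ of length exceeding $M_1$ has terminal vertex $(x,y)$ with $y>M_1$. This is exactly the role of the length hypothesis. If ${\tt len}(s)>M_1$, the terminal vertex of $s$ already has $y>M_1$. If $s=\varnothing$, I first use the pruned property of $T$ to extend $\varnothing$ by arbitrary choices for $M_1+1$ steps, producing some $s'\in T$ whose terminal vertex has $y>M_1$, and carry out the construction below starting from $s'$; the finitely many initial values of $u$ and $\gell$ do not affect the $\overline{\lim}$, and the resulting $v\in P_{s'}\subseteq P_\varnothing=P_s$.

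For the main induction, assume $v|_k\in T_s$ has been built with terminal vertex $v_k=(x_k,y_k)$ satisfying $y_k>M_1$. Let $n_k$ be the largest positive integer $n$ with $M_n<y_k$, which is well-defined and finite since $(M_n)$ is strictly increasing and $M_1<y_k$. Setting $\ttt\defeq u(v|_k)\in[\gb,\infty)$ and applying the target-controlled property at $v_k$ with $\eps=\eps_{n_k}=2^{-n_k}$, I obtain $v_{k+1}\in V$ with $(v_k,v_{k+1})\in\gB$ and
$$
\gell(v_{k+1})\in\bigl[u(v|_k),\,u(v|_k)+2^{-n_k}\bigr).
$$
Appending $v_{k+1}$ keeps the sequence in $T_s$, and by the growth property $y_{k+1}>y_k>M_1$, so the induction sustains. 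Passing to the limit yields $v\in P_s$. Because $(y_k)$ is a strictly increasing integer sequence, $y_k\to\infty$, hence $n_k\to\infty$ and $2^{-n_k}\to 0$; therefore $\bigl|\gell(v_{k+1})-u(v|_k)\bigr|\to 0$, which gives
$$
\mathop{\overline{\rm lim}}\limits_{k\to\infty}\gell(v_{k+1})
=\mathop{\overline{\rm lim}}\limits_{k\to\infty}u(v|_k),
$$
and a shift of index on the left side delivers the required equality.

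The main subtlety I expect is bookkeeping the $y$-threshold across iterations: the targeting property at step $k$ is applicable only because $y_k>M_{n_k}$, while for the limsup conclusion I also need $\eps_{n_k}\to 0$. Both are delivered, for free, by the strict growth of $y$-coordinates along trails in $P$, which is precisely why the hypothesis is phrased in terms of $M_1$: the starting bound guarantees $y>M_1$ throughout the construction, and the unboundedness of $(y_k)$ then pushes $n_k$ to infinity automatically, with no further growth control on $(M_n)$ required.
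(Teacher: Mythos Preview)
Your argument is correct and in fact cleaner than the paper's. The paper builds $v$ in \emph{blocks}: on each block $(l_n,l_{n+1}]$ it holds $\gell(v_m)$ inside a fixed window $[\ttt_n,\ttt_n+\eps_n)$, where $\ttt_n$ is the maximum of $u(v|_m)$ over the \emph{previous} block, and then compares the two $\limsup$'s via a block-by-block inequality chain. You instead couple the two sequences step by step, targeting $u(v|_k)$ directly at each stage so that $0\le\gell(v_{k+1})-u(v|_k)<2^{-n_k}\to 0$; the equality of the two $\limsup$'s then drops out immediately. Your version exploits the same structural facts --- the strict growth of the $y$-coordinate along $\gB$-edges (hence $y_k\to\infty$ and $n_k\to\infty$) and the target-controlled property --- but avoids the block bookkeeping entirely. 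The paper's block scheme buys nothing extra here; your one-step coupling is the more economical proof.

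One small caveat: the lemma allows $\gb=-\infty$, in which case $u(v|_k)$ may equal $-\infty$, and the target-controlled property requires a \emph{real} target $\ttt\in[\gb,\infty)\cap\R$. The paper handles this by replacing non-real maxima with $-n$; in your scheme the analogous one-line fix is to target $\ttt_k\defeq\max\bigl(u(v|_k),-k\bigr)$ instead of $u(v|_k)$. Then $\gell(v_{k+1})\ge u(v|_k)$ still holds, and since $\limsup_k\max(u(v|_k),-k)=\max\bigl(\limsup_k u(v|_k),-\infty\bigr)=\limsup_k u(v|_k)$, the upper bound goes through unchanged. With this adjustment your proof covers the general statement.
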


\begin{proof}
By induction on $n$, we construct $\ttt_n\in [\gb,\infty)\cap\R$ and $v_n\in V$ as follows. 

Let $\ttt_1\in [\gb,\infty)\cap\R$ be arbitrary.
Put $l_1\defeq{\tt len}(s)$. If $s=\varnothing$, then $l_1=0$ and we
set $v_0\defeq (M_1,M_1+1)$. If $s\ne\varnothing$, then $l_1>M_1$,
and writing $s=(s_1,\ldots,s_{l_1})$ we put $v_m\defeq s_m$ for $1\le m\le l_1$.
In both cases, $v_{l_1}\defeq (x_{l_1},y_{l_1})\in V$ and we have
$y_{l_1}>M_1$ (since $s\in T$).

Since $\gell$ is $(\gB,\gb)$-target-controlled, there is a vertex
$v_{l_1+1}=(x_{l_1+1},y_{l_1+1})\in V$ such that
$(v_{l_1},v_{l_1+1})\in \gB$ and 
$\gell (v_{l_1+1})\in [\ttt_1,\ttt_1+\eps_1)$. Moreover,
$$
y_{l_1+1}>x_{l_1+1}=y_{l_1}>M_1.
$$
Iterating this process, for each $j\ge 1$ we obtain 
$v_{l_1+j}=(x_{l_1+j},y_{l_1+j})\in V$ such that
$(v_{l_1+j-1},v_{l_1+j})\in \gB$,
$\gell (v_{l_1+j})\in [\ttt_1,\ttt_1+\eps_1)$, and
$$
y_{l_1+j}>\cdots>y_{l_1+1}>y_{l_1}>M_1.
$$
Then, for some sufficiently large $j$, we also have $y_{l_1+j}>M_2$. Let
$l_2\defeq l_1+j$ for some such $j$, and put
$$
\ttt_2\defeq\begin{cases}
\displaystyle\max\limits_{l_1<m\le l_2}u\big((v_1,\ldots,v_m)\big)
&\quad\hbox{if this maximum lies in $\R$},\\
-2&\quad\hbox{otherwise}.
\end{cases}
$$
Since $\gell$ is $(\gB,\gb)$-target-controlled, there is a vertex
$v_{l_2+1}=(x_{l_2+1},y_{l_2+1})\in V$ such that
$(v_{l_2},v_{l_2+1})\in \gB$ and 
$\gell (v_{l_2+1})\in [\ttt_2,\ttt_2+\eps_2)$, and the above process can
be repeated.

Continuing in this manner, we obtain sequences
$l\defeq(l_n)_{n\in\N_1}$, $v\defeq(v_n)_{n\in\N_1}$, and $\ttt\defeq(\ttt_n)_{n\in\N_1}$ such that
\begin{itemize}
\item[$(i)$] $l$ is strictly increasing, with $y_{l_n}>M_n$ for each $n\in\N_1$;
\item[$(ii)$] $v_n\in V$ and
$(v_n,v_{n+1})\in \gB$ for all $n\in\N_1$, hence $v\in P$;
\item[$(iii)$] $\ttt$ is a real sequence, with
$\ttt_n$ for $n\ge 2$ defined by
$$
\ttt_n\defeq\begin{cases}
\displaystyle\max\limits_{l_{n-1}<m\le l_n}u(v|_m)
&\quad\hbox{if this maximum lies in $\R$},\\
-n&\quad\hbox{otherwise}.
\end{cases}
$$
\item[$(iv)$] $\gell(v_m)\in[\ttt_n,\ttt_n+\eps_n)$ for
$l_n<m\le l_{n+1}$.
\end{itemize}

\bigskip

For any integer $m>l_1$, by $(i)$ there is a unique 
$n\in\N_1$ with $l_n<m\le l_{n+1}$, and so by $(iii)$ and $(iv)$ we have
$$
u(v|_m)\le\ttt_{n+1}\le\gell(v_{l_{n+2}}),
$$
which implies that
$$
\mathop{\overline{\rm lim}}\limits_{m\in\N_1}u(v|_m)
\le\mathop{\overline{\rm lim}}\limits_{n\in\N_1}\gell(v_n).
$$

It remains to establish the inverse inequality
\be\label{eq:opp-ineq}
\mathop{\overline{\rm lim}}\limits_{n\in\N_1}\gell(v_n)
\le\mathop{\overline{\rm lim}}\limits_{k\in\N_1}u(v|_k).
\ee
For this, we can clearly assume that
$$
\mathop{\overline{\rm lim}}\limits_{n\in\N_1}\gell(v_n)
=\lim\limits_{j\in\N}\gell(v_{m_j})\ne -\infty
$$
with some strictly increasing sequence $(m_j)_{j\in\N}\subset\N_1$.
Fixing $\eps>0$, $n\in\N_1$ with $\eps_n<\eps$, and 
$j$ such that $l_n<m_j\le l_{n+1}$, we have
$$
\gell(v_{m_j})<\ttt_n+\eps_n
=\eps_n+\begin{cases}
\displaystyle\max\limits_{l_{n-1}<k\le l_n}u(v|_k)
&\quad\hbox{if this maximum lies in $\R$},\\
-n&\quad\hbox{otherwise}.
\end{cases}
$$ 
Thus $\gell (v_{m_j})<u(v|_{k_j})+\eps$ holds with some $k_j$ in the range
$l_{n-1}<k_j\le l_n$. This shows that
$$
\mathop{\overline{\rm lim}}\limits_{n\in\N_1}\gell(v_n)
\le\mathop{\overline{\rm lim}}\limits_{k\in\N_1}u(v|_k)+\eps,
$$
and as $\eps$ is arbitrary, we deduce \eqref{eq:opp-ineq}. This completes the proof.
\end{proof}

\subsection{Background on games}

Let $A$ be a nonempty set, $\cT$ be a nonempty pruned tree on $A$, and $\cX\subset [\cT]$. We consider the \emph{game} $G(\cT,\cX)$ played as follows:
\begin{alignat*}{50}
&\text{Player~1}:\quad&a_0\quad&&a_2\quad&\cdots&\\ 
&\text{Player~2}:\quad&&a_1\quad&&a_3\quad&&\cdots
\end{alignat*}
The two players take turns playing $a_0,a_1,a_2,\ldots\in A$
with the requirement that $(a_0,\ldots,a_n)\in \cT$
for each $n$ (thus, $\cT$ is understood to be
the set of ``rules'' of the game). Player~1 wins if and only if the final sequence
$(a_n)_{n\in\N}$ belongs to $\cX$. A \emph{strategy for Player~1} is a map
$\varphi:A^{<\omega}\to A$, with Player~1 playing $a_0=\varphi(\varnothing)$,
followed by $a_2=\varphi\big((a_1)\big)$, $a_4=\varphi\big( (a_1,a_3)\big)$,
etc., in response to Player~2 playing $a_1$, $a_3$, etc.
The notion of a strategy for Player~2 is 
defined similarly. A strategy is called \emph{winning} if the relevant player
always wins the game while following her strategy.
The game $G(\cT,\cX)$ is \emph{determined} if one of the two 
players has a winning strategy. We equip $A^\omega$ with the product topology of the 
discrete topology on $A$, and we equip $[\cT]$ with the relative topology.
By Martin's theorem, if $\cX$ is a Borel subset of $[\cT]$, then $G(\cT,\cX)$ is determined 
(see \cite[Theorem 20.5]{K}).

\subsection{The main theorem}
\label{sec:pf-main-thm}

We now come to the main result of the paper. 
As in \S\ref{sec:key-lime-pie}, for a given subset $\gB$ of $W$,
we define the trees $T=T_\gB$ and $P=P_\gB$, and we fix $\gb\in\R\cup\{-\infty\}$.
In what follows, we suppose that
$\gell :V\to[\gb,\infty)$ is a $(\gB,\gb)$-target-controlled map.

\begin{theorem}\label{thm:gencomp}
The map $f:P\to[\gb,\infty]$ defined by 
\be\label{eq:complete-function}
f\big((v_n)_{n\in\N_1}\big)\defeq\mathop{\overline{\rm lim}}\limits_{n\in\N_1}\gell (v_n)
\ee
is densely onto and $[\gb,\infty]$-upper semi-Baire class~1 complete.
\end{theorem}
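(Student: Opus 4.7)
The plan splits into (i) dense surjectivity and (ii) completeness. For (i), I would apply Lemma~\ref{lem:tre} directly. Given $\ttt\in[\gb,\infty]$ and a basic clopen set $P_s\subseteq P$, use the prunedness of $T$ to extend $s$ so that $s=\varnothing$ or ${\tt len}(s)>M_1$. Take $u:T_s\to[\gb,\infty)$ to be the constant function equal to $\ttt$ when $\ttt<\infty$, or $u(t)\defeq\max(\gb,{\tt len}(t))$ when $\ttt=\infty$. Then Lemma~\ref{lem:tre} produces $v\in P_s$ with $\overline{\lim}_n\gell(v_n)=\overline{\lim}_m u(v|_m)=\ttt$, showing $f^{-1}(\{\ttt\})$ meets every basic open subset of $P$.

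For (ii), let $g:2^\omega\to[\gb,\infty]$ be upper semi-Baire class~1; I want a continuous $\phi:2^\omega\to P$ with $f\circ\phi=g$. My approach is game-theoretic. Consider the game $\mathcal{G}$ in which Player~2 plays bits $x_n\in\{0,1\}$ and Player~1 plays vertices $v_n\in V$ in alternation (Player~2 first), subject to the rule that $(v_1,\ldots,v_n)\in T$ after every Player~1 move; Player~1 wins iff $g(x)=f(v)$. Since $f$ and $g$ are Borel (both being Baire class~1), the winning set is a Borel subset of $2^\omega\times P$, so by Martin's theorem (\cite[Theorem 20.5]{K}) $\mathcal{G}$ is determined. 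A winning strategy $\tau$ for Player~1 furnishes the desired function: $\phi(x)_n\defeq\tau(x|_n)$ is a continuous map into $V^\omega$, takes values in $P$ by legality, and satisfies $f\circ\phi=g$ by the winning condition.

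It therefore remains to rule out a Player~2 winning strategy $\sigma$. Such $\sigma$ assigns, continuously in $v\in P$, a response $x(v)\in 2^\omega$ with $g(x(v))\ne f(v)$ for every $v\in P$. I would derive the contradiction by constructing $u:T\to[\gb,\infty)$ such that
\[
\overline{\lim}_{m\in\N_1}u(v|_m)=g(x(v))\qquad\text{for every }v\in P,
\]
whereupon Lemma~\ref{lem:tre} (applied with $s=\varnothing$) supplies $v\in P$ with $\overline{\lim}_n\gell(v_n)=\overline{\lim}_m u(v|_m)=g(x(v))$, i.e.\ $f(v)=g(x(v))$, contradicting that $\sigma$ wins. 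To build $u$ I would exploit the characterization of upper semi-Baire class~1 functions alluded to in the introduction (Elekes et al.~\cite[Theorem~2.1]{Elekes}), writing $g=\overline{\lim}_n g_n$ with each $g_n:2^\omega\to[\gb,\infty)$ suitably simple (e.g.\ continuous). Uniform continuity provides lengths $k_n\in\N$ so that $g_n$ is essentially determined by the length-$k_n$ prefix of its argument; then $u(t)$ is defined to be (approximately) the value of $g_{n(t)}$ on the cylinder corresponding to $\sigma$'s response to $t$, for a well-chosen sequence $n(t)\to\infty$ as ${\tt len}(t)\to\infty$.

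\textbf{The main obstacle} is arranging $u$ so that the target identity holds \emph{exactly}: both $\overline{\lim}_m u(v|_m)\le g(x(v))$ and $\overline{\lim}_m u(v|_m)\ge g(x(v))$ must be achieved uniformly along every branch $v\in P$. This calls for a delicate interleaving of the rate at which $n(t)$ grows, the cylindrical approximations of $g$ via the $g_n$, and the tree-wise responses dictated by $\sigma$, and is the technical heart of the completeness proof. Once $u$ is in place, Lemma~\ref{lem:tre} and Martin's theorem together close the argument, yielding the continuous $\phi$ promised by Player~1's winning strategy.
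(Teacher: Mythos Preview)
Your overall architecture matches the paper's: a determined game in which the vertex-player's winning strategy yields the desired continuous $\phi$, and Lemma~\ref{lem:tre} is used to defeat any putative bit-player strategy. Your direct derivation of dense surjectivity from Lemma~\ref{lem:tre} is fine and in fact more elementary than the paper's, which instead deduces it \emph{a posteriori} from completeness by pushing a continuous surjection $2^\omega\twoheadrightarrow[\gb,\infty]$ through the factorization $g=f\circ\phi$. (Minor slip: $g$ is only upper semi-Baire class~1, hence Baire class~2, not Baire class~1; but ``Borel'' is all Martin's theorem needs.)

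Where you diverge is in building $u$, and the ``main obstacle'' you flag is self-inflicted. You propose to represent $g$ itself as a $\limsup$ of continuous $g_n$ on $2^\omega$, then discretize via uniform continuity and interleave with the strategy $\sigma$. The paper bypasses this entirely by applying \cite[Theorem~2.1]{Elekes} not to $g$ but to the \emph{composition} $g\circ\psi:P_s\to[\gb,\infty]$, where $\psi$ is the continuous map $v\mapsto x(v)$ induced by the bit-player's strategy. Since $g\circ\psi$ is upper semi-Baire class~1 on the tree body $P_s=[T_s]$, Elekes et al.\ give $\tilde u:T_s\to\R$ with $g(\psi(v))=\overline{\lim}_m\tilde u(v|_m)$ directly---no interleaving, no rates, no cylindrical approximation. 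One small adjustment remains (and you do not mention it): to feed $u$ into Lemma~\ref{lem:tre} it must land in $[\gb,\infty)$, so the paper first conjugates by an order-homeomorphism $h:[\gb,\infty]\to[0,1]$, truncates $\tilde u$ into $[0,1)$, and pulls back by $h^{-1}$. With that in place, Lemma~\ref{lem:tre} produces $v\in P_s$ with $f(v)=g(\psi(v))$, contradicting the assumed winning strategy. You should also record, as the paper does, that $f$ is itself upper semi-Baire class~1 (immediate from the corresponding property of $\overline L$ on $[\gb,\infty)^\omega$), so that $f$ actually lies in the class for which you are proving completeness.
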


\begin{proof}
Let $(M_n)_{n\in\N_1}$ be defined as in \S\ref{sec:key-lime-pie}.
Let $s\in T$ of length $l_1\defeq{\tt len}(s)$, and assume that
$l_1=0$ or $l_1>M_1$.
Let $g:2^\omega\to [\gb,\infty ]$ be upper semi-Baire class~1.
In the style of the proof of \cite[Theorem 5.1]{Solecki},
we consider the following game, which is of the type $G(\cT,\cX)$ if we code both
$2\defeq\{0,1\}$ and $V$ (which is countably infinite) as subsets of $A\defeq\N$: 
\begin{alignat*}{50}
&\text{Player~1}:\quad&x_0\quad&&x_1\quad&&x_2\quad&\cdots&\\ 
&\text{Player~2}:\quad&&v_1\quad&&v_2\quad&&v_3\quad&&\cdots
\end{alignat*}
with each $x_n\in\{0,1\}$ and each $v_n\in V$; hence the game
produces two sequences $x\defeq(x_n)_{n\in\N}\in 2^\omega$ and
$v\defeq(v_n)_{n\in\N_1}\in V^\omega$.
The only rule of the game is that each $v|_m\defeq(v_1,\cdots,v_m)$ must lie in $T_s$.
Player~1 wins the game if and only if $g(x)\not= f(v)$.

Since $f,g$ are Borel maps, the collection $\cX$ of winning outcomes for
Player~1 is Borel, so the game is determined by Martin's theorem.
A winning strategy for Player~2 induces a continuous function
$\phi:2^\omega\to P_s$ such that $g=f\circ\phi$.
Let us prove that Player~1 cannot have a winning strategy.

Suppose, on the contrary, that Player~1 has a winning strategy, i.e., that
there is a continuous function $\psi:P_s\to 2^\omega$ such that
$g(\psi(v))\ne f(v)$ for every $v\in P_s$.
Note that $g\circ\psi :P_s\to [\gb,\infty ]$ is upper semi-Baire class~1,
and for any increasing homeomorphism $h:[\gb,\infty ]\to [0,1]$, the map
$c\defeq h\circ g\circ\psi :P_s\to [0,1]$ is also upper semi-Baire class~1. By \cite[Theorem 2.1]{Elekes}, there is a map
$\tilde u:T_s\to\R$ such that
$$
\forall\,v\in P_s:\quad
c(v)=\mathop{\overline{\rm lim}}\limits_{m\in\N_1}\tilde u(v|_m).
$$
Define $u':T_s\to [0,1)$ by 
$$
\forall\,t\in T_s:\quad
u'(t)\defeq\begin{cases}
\tilde u(t)&\quad\hbox{if $\tilde u(t)\in [0,1)$},\\
0&\quad\hbox{if $\tilde u(t)<0$},\\
1-2^{-{\tt len}(t)}&\quad\hbox{if $\tilde u(t)\ge 1$}.
\end{cases}
$$
Note that 
$$
\forall\,v\in P_s:\quad
c(v)=\mathop{\overline{\rm lim}}\limits_{m\in\N_1}u'(v|_m).
$$
Finally, defining $u:T_s\to[\gb,\infty)$ by $u\defeq h^{-1}\circ u'$, we see that
$$
\forall\,v\in P_s:\quad
g\big(\psi(v))=h^{-1}\big(c(v)\big)
=\mathop{\overline{\rm lim}}\limits_{m\in\N_1}u(v|_m).
$$
On the other hand, Lemma \ref{lem:tre} provides $v\in P_s$ such that 
$$
g\big(\psi(v))=\mathop{\overline{\rm lim}}\limits_{m\in\N_1}u(v|_m)=
\mathop{\overline{\rm lim}}\limits_{n\in\N_1}\gell(v_n)=f(v),
$$
which is the desired contradiction.

Note that the function $\overline{L}:[\gb,\infty )^\omega\to[\gb,\infty]$ given by 
$$
\forall\,x=(x_n)_{n\in\N_1}:\quad
\overline{L}(x)\defeq\mathop{\overline{\rm lim}}\limits_{n\in\N_1}x_n
$$ 
is upper semi-Baire class~1. Indeed, for any $\ttt\in\R$ we have 
$$
\mathop{\overline{\rm lim}}\limits_{n\in\N_1}x_n<\ttt
\quad\Longleftrightarrow\quad
\exists\,m,k\in\N,\,\forall\,n\ge m:\quad x_n\le \ttt-2^{-k},
$$
and the latter is a $\borels{0}{2}$ condition. Noting that $P$ is Polish (for it
is a closed subset of the Polish space $V^\omega$), and $f$ is the composition
of the continuous map $((v_n)_{n\in\N_1})\mapsto (\gell(v_n))_{n\in\N_1}$
with $\overline{L}$, it follows that $f$ is upper semi-Baire class~1.  Taking $s=\varnothing$, we deduce that $f$ is $[\gb,\infty ]$-upper semi-Baire class~1 complete.

To finish the proof, let $\ttt\in [\gb,\infty ]$,
and let $O$ be an arbitrary nonempty open subset of $P$.
Fix an element $s\in T$ such that ${\tt len}(s)>M_1$ and $P_s\subset O$.
Since $[\gb,\infty]$ is a nonempty metrizable compact space,
\cite[Theorem 4.18]{K} yields a continuous surjection 
$g:2^\omega\to[\gb,\infty]$; in particular, $g$ is upper semi-Baire class~1.
By the above argument, there is a continuous function $\phi:2^\omega\to P_s$
such that $g=f\circ\phi$.
Now, let $x\in 2^\omega$ with 
$g(x)=\ttt$. Then $\phi (x)\in P_s\subset O$ and $f(\phi (x)) =\ttt$, 
which shows that $f$ is densely onto.
\end{proof}

\medskip

\begin{proof}[Proof of Theorems~\ref{thm:densely-onto}
and \ref{thm:main3}]
Let $\ell:V\to[1,\infty)$ be the map given by
\eqref{eq:ell-defn}, and let
$$
B\defeq\big\{\big((x_1,y_1),(x_2,y_2)\big)\in V^2:x_2=y_1
\text{~and~}y_2=ay_1+x_1\text{~for some~}a\in\N_1\big\}.
$$
As noted previously, $\ell$ is $(B,1)$-target-controlled 
in view of Lemma~\ref{lem:targeting}.

To facilitate the proofs, consider the following commutative diagram:
\begin{center}\begin{tikzcd}[arrows={-Stealth}]
    \RQ \arrow{r}{\phi}\arrow{dd}[swap]{\tau}
    \arrow[rrr, "{\lambda}"{anchor=south}, bend left]& 
    \Z\times\N_1^\omega \arrow{r}{\pi} &
    \N_1^\omega \arrow{r}{\Phi} &
    P \arrow{ddlll}{f}\\ 
    & & & \\
    {[1,\infty]} & & & 2^\omega \arrow{uu}[swap]{\varphi} \arrow{lll}[swap]{g}  \\
\end{tikzcd}\end{center}
\vskip-.25in
\noindent
In what follows, we describe the various maps shown in the diagram and study
the interplay between them.
First of all, we have the type function $\tau:\RQ\to[1,\infty]$, which is
the subject of the present paper.
Next, the map $\phi:\RQ\to\Z\times\N_1^\omega$ given by
$$
\alpha=[a_0;a_1,a_2,\ldots]\mapsto\phi(\alpha)\defeq(a_n)_{n\in\N}
$$
is the homeomorphism given in Proposition \ref{prop:homeomorphism}.
To every $(a_n)_{n\in\N}$
we associate a sequence $(k_n)_{n\in\N}$ of positive integers using
the definition \eqref{eq:kn-recursion} (so the $k_n$ are the denominators of
the continued fractions for $\alpha$). Then,
defines a sequence $(v_n)_{n\in\N_1}$ of vertices in $V$ as follows:
$$
\forall\,n\in\N_1:\quad v_n\defeq(k_n,k_{n+1}).
$$
Recall that in \S\ref{sec:fraction-trails} we have seen that
$$
\tau(\alpha)
=\mathop{\overline{\rm lim}}\limits_{n\to\infty}\frac{\log k_{n+1}}{\log k_n}
=\mathop{\overline{\rm lim}}\limits_{n\to\infty}\ell(v_n).
$$
For each $n\in\N_1$ we have $v_n\squig{a_{n+2}} v_{n+1}$, which implies
$(v_n,v_{n+1})\in B$; thus we deduce that $(v_n)_{n\in\N_1}$ lies in $P$. We let 
$f:P\to [1,\infty]$ be the map given by
$$
\forall\,v=(v_n)_{n\in\N_1}\in P:\quad
f(v)\defeq\mathop{\overline{\rm lim}}
\limits_{n\in\N_1}\ell(v_n)=\tau(\alpha).
$$
Next, observe that the map $\Phi:\N_1^\omega\to P$ given by 
$$
\forall\,a=(a_n)_{n\in\N_1}\in \N_1^\omega:\quad
\Phi(a)\defeq(v_n)_{n\in\N_1}
$$ 
is continuous. Moreover, setting $a_1\defeq k_1$ and
$a_{n+2}\defeq(k_{n+2}-k_n)/k_{n+1}$ for all $n\in\N$ and 
$(v_n)_{n\in\N_1}\in P$, it is clear that the inverse map
$\Phi^{-1}:P\to\N_1^\omega$ given by 
$$
\forall\,v=(v_n)_{n\in\N_1}\in P:\quad
\Phi^{-1}(v)\defeq(a_n)_{n\in\N_1}
$$ 
is also continuous. Therefore, $\Phi$ is a homeomorphism of $\N_1^\omega$
onto $P$. Finally, we let $\pi:\Z\times\N_1^\omega\to\N_1^\omega$ be the
projection given by $\pi((a_n)_{n\in\N})\defeq (a_n)_{n\in\N_1}$.

Let $\lambda:\RQ\to P$ be the composition $\Phi\circ\pi\circ\phi$; it is
a continuous map. Note that $\tau=f\circ\lambda$. 
By Theorem~\ref{thm:gencomp}, $f$ is densely onto, and so for any $\ttt\in[1,\infty]$:\dalign{
f^{-1}(\{\ttt\})\text{~is dense in~}P
&\quad\Longrightarrow\quad
(f\circ\Phi)^{-1}(\{\ttt\} )\text{~is dense in~}\N_1^\omega\\
&\quad\Longrightarrow\quad
(f\circ\Phi\circ\pi)^{-1}(\{\ttt\})\text{~is dense in~}\Z\times\N_1^\omega\\
&\quad\Longrightarrow\quad
\tau^{-1}(\{\ttt\})=(f\circ\lambda)^{-1}(\{\ttt\})\text{~is dense in~}\RQ,
}
which completes our third proof of Theorem~\ref{thm:densely-onto}.

We turn to the proof of Theorem~\ref{thm:main3}.
By Theorem~\ref{thm:gencomp}, $f$ is $[1,\infty]$-upper semi-Baire class~1 complete.
If $g:2^\omega\to [1,\infty]$ is also upper semi-Baire class~1, then 
there is a continuous map $\varphi:2^\omega\to P$
for which $g=f\circ\varphi$ (see the diagram).
Let $\rho:2^\omega\to\RQ$ be defined by
$$
\forall\,x\in 2^\omega:\quad 
\rho(x)\defeq\phi^{-1}\big(0,\Phi^{-1}\big(\varphi(x)\big)\big).
$$
Then $\rho$ is continuous, and
$$
\forall\,x\in 2^\omega:\quad
\tau\big(\rho(x)\big)=f\big(\varphi(x)\big)=g(x).
$$
The theorem follows.
\end{proof}

\newpage{\large\section{Complexity and Baire category}
\label{sec:complexity}}

In this section, we give general conditions ensuring the descriptive set
theoretic property of the type mentioned in Theorem~\ref{thm:main2}. 
We also provide Baire category properties.

Recall that a subset of a topological space is said to be \emph{nowhere dense} if its
closure has an empty interior,
\emph{meager} if it is a countable union of nowhere dense
sets, and \emph{comeager} if its complement is meager.
Meagerness is a very useful notion 
of smallness in any completely metrizable space (cf.\ \cite[\S8.B]{K}) since
the class of meager sets is a $\sigma$-\emph{ideal}, i.e., it is closed under taking 
subsets and countable unions. A property that holds for a comeager set of points 
intuitively holds topologically ``almost everywhere'' in such spaces.
Of course, a set can be large in the sense of topology and small in the sense of
measure. Consider, for example, the set of Liouville numbers. The proof of
Theorem \ref{thm:main2} shows that the Liouville numbers form a comeager subset
of $\RQ$, whereas the same set is well known to have Lebesgue measure zero;
see Oxtoby~\cite[\S2]{Oxtoby} (we also refer the reader to \cite{Oxtoby} for
general background on measure and topology).
 
\bigskip
 
\begin{theorem}\label{thm:gen++}
The function $f:P\to [\gb,\infty ]$ defined by \eqref{eq:complete-function}
is upper semi-Baire class~1 and Baire class 2, but it is not Baire class~1.
More precisely, $f$ has the properties outlined in the following
table.\medskip

\centerline{\scalebox{0.9}{$
\begin{tabular}{|c|c|c|c|}
\hline\vphantom{\Big|}
 & $\ttt=\gb$ & $\ttt\in(\gb,\infty )$ & $\ttt=\infty$ \\
\hline\vphantom{\Big|}
$v:f(v)<\ttt$ & $\varnothing$ & meager $\borels{0}{2}$-complete & meager $\borels{0}{2}$-complete\\
\hline\vphantom{\Big|}
$v:f(v)\ge \ttt$ & $P$ & comeager $\borelp{0}{2}$-complete & comeager $\borelp{0}{2}$-complete\\
\hline\vphantom{\Big|}
$v:f(v)>\ttt$ & comeager $\borels{0}{3}$-complete & comeager $\borels{0}{3}$-complete & $\varnothing$ \\
\hline\vphantom{\Big|}
$v:f(v)\le \ttt$ & meager $\borelp{0}{3}$-complete & meager $\borelp{0}{3}$-complete & $P$\\
\hline\vphantom{\Big|}
$v:f(v)=\ttt$ & meager $\borelp{0}{3}$-complete & meager $\borelp{0}{3}$-complete & comeager $\borelp{0}{2}$-complete\\
\hline
\end{tabular}$
}}
\end{theorem}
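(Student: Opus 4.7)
The plan is to pin down each of the five rows of the table via three ingredients: an upper bound on complexity (read off from the limsup definition of $f$), a matching lower bound (by reducing a known complete set to the relevant level set via Theorem~\ref{thm:gencomp}), and a Baire category analysis (leveraging the $(\gB,\gb)$-target-controlled property of $\gell$). Setting aside the trivial corners ($\{f<\gb\}=\varnothing$, $\{f\geq\gb\}=P$, $\{f>\infty\}=\varnothing$, $\{f\leq\infty\}=P$), I would begin by unwinding
$$
\{v\in P:f(v)<\ttt\}=\bigcup_{k,N\in\N_1}\bigcap_{n\geq N}\{v\in P:\gell(v_n)\leq\ttt-1/k\},
$$
exhibiting this set as $\borels{0}{2}$; complementation together with $\{f>\ttt\}=\bigcup_k\{f\geq\ttt+1/k\}$ delivers the remaining upper bounds, along with $\{f=\infty\}=\{f\geq\infty\}\in\borelp{0}{2}$. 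These bounds yield at once that $f$ is upper semi-Baire class~$1$ and, since preimages of open sets land in $\borels{0}{3}$, that $f$ is Baire class~$2$. Granted the $\borels{0}{3}$-completeness derived below, $f$ is \emph{not} Baire class~$1$, for otherwise $f^{-1}((\ttt,\infty])\in\borels{0}{2}$, contradicting the fact that a $\borels{0}{3}$-complete set cannot be $\borelp{0}{3}$.

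For the matching lower bounds, in each row I would construct an upper semi-Baire class~$1$ test function $g:2^\omega\to[\gb,\infty]$ whose relevant level set coincides with a known complete subset of $2^\omega$ from \S\ref{sec:various-complete-sets}, and invoke Theorem~\ref{thm:gencomp} to obtain a continuous $\phi:2^\omega\to P$ with $g=f\circ\phi$; the Wadge reduction then follows by pullback. For the row $\{f<\ttt\}$ (and $\{f<\infty\}$), take $g$ equal to $\gb$ on the complete $\borels{0}{2}$ set $\cL$ of Lemma~\ref{lem:Cantor-pi02} and equal to a constant $\geq\ttt$ off $\cL$. For $\{f>\ttt\}$ with $\ttt<\infty$, pick any $\borels{0}{3}$-complete $A\subseteq 2^\omega$, write $A=\bigcup_n C_n$ with $(C_n)$ increasing in $\borelp{0}{2}$, and set $g(x)\defeq\ttt$ off $A$ and $g(x)\defeq\ttt+1/n$ on $C_n\setminus C_{n-1}$; a short computation gives $\{g<r\}=2^\omega\setminus C_N$ when $r\in(\ttt+1/(N+1),\ttt+1/N]$, certifying $g$ as upper semi-Baire class~$1$. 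For $\{f\leq\ttt\}$ and $\{f=\ttt\}$ with $\ttt<\infty$, use Lemma~\ref{lem:matrix-pi03}: write the $\borelp{0}{3}$-complete set $\cM$ as $\bigcap_n A_n$ with $(A_n)$ decreasing in $\borels{0}{2}$ and put $g(x)\defeq\ttt$ on $\cM$, $g(x)\defeq\ttt+1/n$ on $A_{n-1}\setminus A_n$, so that $\{g<r\}=A_N$ for the same range of $r$ and $\{g=\ttt\}=\{g\leq\ttt\}=\cM$. The rows $\{f\geq\ttt\}$ and $\{f=\infty\}$ then follow by complementation; the boundary case $\ttt=\gb$ requires no essential change since $g\geq\gb$ throughout.

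The Baire category analysis reduces to showing that, for each $\ttt\in(\gb,\infty]$, every closed set
$$
\cF_{N,k}\defeq\bigcap_{n\geq N}\{v\in P:\gell(v_n)\leq\ttt-1/k\}
$$
is nowhere dense in $P$. Given a nonempty basic open $P_s\subseteq P$ with $s\in T$, I would extend $s$ to some $s_0\in T_s$ with ${\tt len}(s_0)\geq N$ whose terminal vertex has $y$-coordinate exceeding the threshold $M_{\gB,\gb}(\eps)$ (using pruned-ness of $T$ together with the fact that $y$-coordinates strictly increase along any $\gB$-trail), and then apply the target-controlled property once more to produce $s'\in T_{s_0}$ with $\gell$ of the final vertex in $(\ttt,\ttt+\eps)$; the open set $P_{s'}\subseteq P_s$ is then disjoint from $\cF_{N,k}$. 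Hence $\{f<\ttt\}=\bigcup_{N,k}\cF_{N,k}$ is meager, so $\{f\geq\ttt\}$ is comeager; since $\{f=\infty\}=\bigcap_n\{f\geq n\}$, Baire's theorem keeps $\{f=\infty\}$ comeager, and the remaining category entries follow by containment and complementation. The main obstacle I anticipate is bookkeeping for the test functions $g$: uniformly verifying that $\{g<r\}$ is a single member of the ambient class (rather than a messy union) calls for care at the boundaries $r=\ttt$ and $r=\gb$. The genuinely new content---the targeting iteration---is already packaged in Lemmas~\ref{lem:targeting} and~\ref{lem:tre}, so the theorem becomes essentially a matter of organizing the pieces.
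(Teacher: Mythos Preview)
Your proposal is correct and takes a genuinely different route from the paper for the hardness direction. The paper proves $\borelp{0}{3}$-completeness of $\{f\le\ttt\}$ and $\{f=\ttt\}$ by an explicit construction of a continuous reduction $F:\N_1^\omega\to P$ witnessing $\cN_\infty\le_W\{f=\ttt\}$, iterating the target-controlled property directly to steer $\gell(v_n)$ into shrinking intervals $U_{j(n)}=[\ttt_{j(n)},\ttt_{j(n)}+\eps_{j(n)})$ indexed by $\beta(n)$; for the $\borels{0}{2}$/$\borelp{0}{2}$ rows it instead combines the ``densely onto'' conclusion of Theorem~\ref{thm:gencomp} with Baire's and Wadge's theorems, avoiding any explicit reduction. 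Your approach is more uniform and conceptual: you invoke the full $[\gb,\infty]$-upper semi-Baire class~1 completeness of $f$ from Theorem~\ref{thm:gencomp} and, for each row, manufacture an upper semi-Baire class~1 test function $g$ on $2^\omega$ whose relevant level set is a known complete set, so that the continuous $\phi$ with $g=f\circ\phi$ furnishes the Wadge reduction automatically. This buys modularity and avoids the paper's bespoke construction, at the cost of leaning on the stronger Theorem~\ref{thm:gencomp} (proved earlier, so there is no circularity). Your Baire-category argument is the same idea as the paper's, though the paper shortcuts it by noting that $f^{-1}(\{\infty\})$ is a dense $\borelp{0}{2}$ (hence comeager) straight from ``densely onto,'' whereas you reprove nowhere-density of the $\cF_{N,k}$ by hand. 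One cosmetic wrinkle: your formula $\ttt-1/k$ is ill-defined at $\ttt=\infty$, but you recover that case correctly via $\{f=\infty\}=\bigcap_n\{f\ge n\}$.
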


\begin{proof}
First, note that $f$ is upper semi-Baire class~1, by Theorem \ref{thm:gencomp}.
This implies that $f$ is Baire class 2 because
\begin{alignat*}{2}
f(v)<\infty&\quad\Longleftrightarrow\quad
\exists\,n\in\N:\quad f(v)<n
&&\qquad(\text{$\borels{0}{2}$ condition}),\\
f(v)>\ttt&\quad\Longleftrightarrow\quad
\exists\,k\in\N:\quad f(v)\ge \ttt+2^{-k}
&&\qquad(\text{$\borels{0}{3}$ condition}),\\
f(v)>-\infty&\quad\Longleftrightarrow\quad
\exists\,n\in\Z:\quad f(v)\ge n
&&\qquad(\text{$\borels{0}{3}$ condition}).
\end{alignat*}
On the other hand, we show below that the condition $f(v)>\ttt$ is not $\borelp{0}{3}$
for any real $\ttt\ge \gb$ (hence not a $\borels{0}{2}$ condition),
and thus $f$ is not Baire class~1.

As above, the set $f^{-1}(\{\infty\})$ is in $\borelp{0}{2}$,
and by Theorem~\ref{thm:gencomp}, the map $f$ is densely onto; therefore, 
$f^{-1}(\{\infty\})$ is a dense $\borelp{0}{2}$, and thus it is a countable
intersection of dense open sets. The complement of $f^{-1}(\{\infty\})$ is a
countable union of closed sets with an empty interior,
and so $f^{-1}(\{\infty\})$ is comeager, a fact that implies all
of the meagerness and comeagerness properties listed in the table.

Again, since $f$ is densely onto, both $f^{-1}(\{\infty\})$ and its complement
are dense in $P$. As in the proof of Lemma~\ref{lem:Cantor-pi02}, we deduce that
$f^{-1}(\{\infty\})$ is $\borelp{0}{2}$-complete.
Using similar arguments, we find that the set $f^{-1}([-\infty,\ttt))$ is
$\borels{0}{2}$-complete for any $\ttt\in(\gb,\infty)$,
This proves our assertions for the rows $v:f(v)<\ttt$ and $v:f(v)\ge\ttt$ in the
table, as well as the column $\ttt=\infty$.

From now on, let $\ttt\in [\gb,\infty)$ be fixed.
We have seen that $f^{-1}([-\infty,\ttt])$ is $\borelp{0}{3}$
and $f^{-1}([\ttt,\infty])$ is $\borelp{0}{2}$,
which implies that $f^{-1}(\{\ttt\} )$ is $\borelp{0}{3}$.
To finish the proof, we need to prove that $f^{-1}([-\infty,\ttt])$
and $f^{-1}(\{t\})$ are $\borelp{0}{3}$-complete.

In view of Lemma~\ref{lem:diverges-pi03}, it is enough to show 
that $\cN_\infty\le_Wf^{-1}([-\infty,\ttt])$ and
$\cN_\infty\le_Wf^{-1}(\{\ttt\})$, and for this, it suffices to find
a continuous map
$$
F:\N_1^\omega\to P
$$
with the following property:
\be\label{eq:cat-tails}
\beta\in\cN_\infty\quad\Longleftrightarrow\quad
f(F(\beta))=\ttt\quad\Longleftrightarrow\quad f(F(\beta))\le \ttt.
\ee
One such map $F$ can be constructed as follows.

Fix once and for all two
strictly decreasing sequences $(\ttt_j)_{j\in\N}$ and
$(\eps_j)_{j\in\N}$ of real numbers with
\be\label{eq:nueps-limits}
\lim\limits_{j\to\infty}\ttt_j=\ttt
\mand
\lim\limits_{j\to\infty}\eps_j=0.
\ee
We further assume that $\eps_j<1$ for all $j\in\N$, and that the intervals
$$
\forall j\in\N:\quad
U_j\defeq [\ttt_j,\ttt_j+\eps_j)
$$
are pairwise disjoint (equivalently, $\ttt_i+\eps_i\le \ttt_j$ for all $i>j$).
Let $(y_j)_{j\in\N}$ be a (strictly increasing) sequence of positive integers with the property that
$$
\forall j\in\N:\quad
y_j\ge M_{\gB,\gb}(\eps_j).
$$
This definition is motivated by the fact that $\gell$ is $(\gB,\gb)$-target-controlled, 
which implies that if $v=(x,y)$ lies in $V$ and $y>y_j$, then there is a vertex
$w\in V$ such that $(v,w)\in \gB$ and $\gell(w)\in U_j$.

To define $F$, let $\beta\in\N_1^\omega$ be given.
Regardless of the value $\beta(0)$, let
$$
k_0\defeq 1,\qquad
k_1\defeq y_1+1,\mand
v_0\defeq(k_0,k_1).
$$
Using induction on $n$, we now construct a sequence $(k_n)_{n\in\N}$
of positive integers and a sequence $(v_n)_{n\in\N_1}$ in $V$.
Indeed, suppose that $v_{n-1}=(k_{n-1},k_n)\in V$ 
has already been defined for some $n\in\N_1$. Let $j(n)$ be the largest
integer satisfying both inequalities
\be\label{eq:j(n)bounds}
j(n)\le\beta(n)\mand y_{j(n)}<k_n.
\ee
Note that $j(n)\ge 1$. Since $\gell$ is $(\gB,\gb)$-target-controlled,
there exists a vertex $v_n=(k_n,k_{n+1})\in V$ such that
\be\label{eq:cough}
(v_{n-1},v_n)\in \gB
\mand
\gell(v_n)\in U_{j(n)},
\ee
which completes the induction. Note that the resulting sequence $(k_n)_{n\in\N}$
is strictly increasing. We define $F:\N_1^\omega\to P$ according to the rule
$$
\beta\mapsto F(\beta)\defeq (v_n)_{n\in\N_1}.
$$
The map $F$ is continuous since, for every $n\in\N$, the choice of
$v_n$ is made using only the values $\beta(m)$ with $m\le n$.

Next, let $\beta\in\N_1^\omega$ be fixed. With the notation above,
let $v\defeq F(\beta)$. By \eqref{eq:complete-function} we have
\be\label{eq:its-a-miracle}
f(v)=\mathop{\overline{\rm lim}}\limits_{n\in\N_1}\gell (v_n),
\ee
and each $\gell(v_n)$ lies in the interval
$U_{j(n)}=[\ttt_{j(n)},\ttt_{j(n)}+\eps_{j(n)})$. 
Putting everything together, we have
\begin{alignat*}{2}
\beta\in\cN_\infty&\quad\Longrightarrow\quad
\lim\limits_{n\to\infty}\beta(n)=\infty
&&\quad\text{(definition of $\cN_\infty$)}\\
&\quad\Longrightarrow\quad
\lim\limits_{n\to\infty}j(n)=\infty
&&\quad\text{(see \eqref{eq:j(n)bounds})}\\
&\quad\Longrightarrow\quad
\lim\limits_{n\to\infty}\ttt_{j(n)}=\ttt\text{~and~}
\lim\limits_{n\to\infty}\eps_{j(n)}=0
&&\quad\text{(see \eqref{eq:nueps-limits})}\\
&\quad\Longrightarrow\quad
\lim\limits_{n\to\infty}\gell(v_n)=\ttt
&&\quad\text{(definition of $U_{j(n)}$)}\\
&\quad\Longrightarrow\quad
f(v)=\ttt
&&\quad\text{(see \eqref{eq:its-a-miracle})}.
\end{alignat*}
On the other hand,
$$
\beta\not\in\cN_\infty\quad\Longrightarrow\quad
\mathop{\underline{\rm lim}}_{n\to\infty}\beta(n)<\infty
\quad\Longrightarrow\quad
\mathop{\underline{\rm lim}}\limits_{n\to\infty}j(n)<\infty.
$$
The last condition implies that there is a positive integer $j_0$
such that $j(n)=j_0$ for infinitely many $n$; therefore,
$$
f(v)=\mathop{\overline{\rm lim}}\limits_{n\to\infty}\gell(v_n)
\ge \ttt_{j_0}>\ttt.
$$
Since $v=F(\beta)$, we have established \eqref{eq:cat-tails},
and we are done.
\end{proof}

We are now ready to prove Theorem~\ref{thm:main2}.

\begin{proof}[Proof of Theorem~\ref{thm:main2}]
As in the proof of Theorems~\ref{thm:densely-onto} and~\ref{thm:main3}
(see \S\ref{sec:pf-main-thm}), we have the commutative diagram:
\begin{center}\begin{tikzcd}[arrows={-Stealth}]
    \RQ \arrow{r}{\phi}\arrow{ddr}[swap]{\tau} & 
    \Z\times\N_1^\omega\,\mathop{\xrightarrow{\hspace*{1cm}}}
    \limits^\pi\,\N_1^\omega \arrow{r}{\Phi} &
    P \arrow{ddl}{f}\\ 
    & & \\
    & {[1,\infty]} & \\
\end{tikzcd}\end{center}
\vskip-.25in
\noindent
For every subset $S\subset\N_1^{\N_1}$, we have $\Z\times S\le_WS$
with witness $(z,a)\mapsto a$, and $S\le_W\Z\times S$ with witness $a\mapsto (0,a)$.
Since $\phi$ and $\Phi$ are homeomorphisms,
the topological properties of $f$ given by Theorem~\ref{thm:gen++} imply
the properties of $\tau$ stated in Theorem~\ref{thm:main2}.
\end{proof}

\begin{remark*}
In Theorem \ref{thm:gen++}, one can replace $\overline\R$ with a dense complete
lattice whose order topology is metrizable. However, such an ordered topological
space is isomorphic to $\overline\R$, so this is not really a generalization.
\end{remark*}

\appendix

\newpage{\large\section{Some equivalent definitions of type}}

The type function can be defined in various ways, and our aim is
to show that the definition in our abstract is consistent with the
definition given in \S\ref{sec:irrtypdef}.
Here, we define type functions $\tau_1$, $\tau_2$, and $\tau_3$ in 
three different ways, showing that $\tau_1=\tau_2=\tau_3$.

In the abstract, we define $\tau\defeq\tau_1$ with
$$
\tau_1(\alpha)\defeq\inf\{t\in\R:|\alpha-h/k|<k^{-t-1}\text{~for only
finitely many~}(h,k)\in\Z\times\N_1\}.
$$
On the other hand, in \S\ref{sec:irrtypdef} we define $\tau\defeq\tau_2$
with
$$
\tau_2(\alpha):=\sup\big\{\theta\in\R:
\mathop{\underline{\rm lim}}\limits_{k\in\N}
~k^\theta\nearint{k\alpha}=0\big\}.
$$
Finally, a useful intermediate definition is $\tau\defeq\tau_3$ with
$$
\tau_3(\alpha):=\sup\big\{\theta\in\R:
\mathop{\underline{\rm lim}}\limits_{k\in\N}
~k^\theta\nearint{k\alpha}<1\big\}.
$$

We have trivially
$$
\mathop{\underline{\rm lim}}\limits_{k\in\N}
~k^\theta\nearint{k\alpha}=0
\quad\Longrightarrow\quad
\mathop{\underline{\rm lim}}\limits_{k\in\N}
~k^\theta\nearint{k\alpha}<1,
$$
and in the opposite direction,
$$
\forall\eps>0:\quad\mathop{\underline{\rm lim}}\limits_{k\in\N}
~k^\theta\nearint{k\alpha}<1
\quad\Longrightarrow\quad
\mathop{\underline{\rm lim}}\limits_{k\in\N}
~k^{\theta-\eps}\nearint{k\alpha}=0.
$$
These implications imply that $\tau_2=\tau_3$.

Next, we show $\tau_1=\tau_3$. If $\tau_3(\alpha)<\infty$, then
from the chain of implications
\dalign{
t>\tau_3(\alpha)&\quad\Longrightarrow\quad
\mathop{\underline{\rm lim}}\limits_{k\in\N}
~k^t\nearint{k\alpha}\ge 1\\
&\quad\Longrightarrow\quad
\exists\,k_0~\forall\,k\ge k_0:\quad k^t\nearint{k\alpha}\ge 1\\
&\quad\Longrightarrow\quad
\exists\,k_0~\forall\,k\ge k_0:\quad k^t\min_{h\in\Z}|k\alpha-h|\ge 1\\
&\quad\Longrightarrow\quad
\exists\,k_0~\forall\,k\ge k_0~\forall\,h:\quad |\alpha-h/k|\ge k^{-t-1}\\
&\quad\Longrightarrow\quad t\ge\tau_1(\alpha),
}
thus $\tau_1(\alpha)$ is also finite, and
\be\label{eq:tnt}
\tau_1(\alpha)\le\tau_3(\alpha).
\ee
Note that \eqref{eq:tnt} is trivial when $\tau_3(\alpha)=\infty$.
In the other direction, we have
\dalign{
t<\tau_3(\alpha)&\quad\Longrightarrow\quad
\forall\,\eps>0:\quad\mathop{\underline{\rm lim}}\limits_{k\in\N}
~k^{t-\eps}\nearint{k\alpha}<1\\
&\quad\Longrightarrow\quad
\forall\,\eps>0~\exists^\infty k:\quad k^{t-\eps}\nearint{k\alpha}<1\\
&\quad\Longrightarrow\quad
\forall\,\eps>0~\exists^\infty k:\quad k^{t-\eps}\min_{h\in\Z}|k\alpha-h|<1\\
&\quad\Longrightarrow\quad
\forall\,\eps>0~\exists^\infty k~\exists\,h:\quad |\alpha-h/k|<k^{-t-\eps-1}\\
&\quad\Longrightarrow\quad
\forall\,\eps>0:\quad t-\eps\le\tau_1(\alpha).
}
If $\tau_3(\alpha)=\infty$, then taking $t\to\infty$ we deduce that
$\tau_1(\alpha)=\infty$ as well; in particular,
\be\label{eq:tnt2}
\tau_3(\alpha)\le\tau_1(\alpha).
\ee
If $\tau_3(\alpha)<\infty$, we see that
$\tau_3(\alpha)\le\tau_1(\alpha)+\eps$ for every $\eps>0$, which implies
\eqref{eq:tnt2} in this case. Finally, combining
\eqref{eq:tnt} and \eqref{eq:tnt2}, we have $\tau_1=\tau_3$.

\newpage


\begin{thebibliography}{99}

\bibitem{Borel}
A.~Borel,
\emph{Linear algebraic groups}.
Second edition. Graduate Texts in Mathematics, 126.
Springer-Verlag, New York, 1991.

\bibitem{Bugeaud}
Y.~Bugeaud,
\emph{Approximation by algebraic numbers}.
Cambridge Tracts in Mathematics, 160. Cambridge University Press, Cambridge, 2004.

\bibitem{DJK}
R.~Dougherty, S.~Jackson and A.~S.~Kechris,
The structure of hyperfinite Borel equivalence relations. 
\emph{Trans. Amer. Math. Soc.} 341 (1994), no.~1, 193--225.

\bibitem{Elekes}
M.~Elekes, J.~Flesch, V.~Kiss, D.~Nagy, M.~Po\'or, and A.~Predtetchinski, 
Games characterizing limsup functions and Baire class~1 functions.
\emph{J.\ Symb.\ Log.} 87 (2022), no.~4, 1459--1473.

\bibitem{Jackson}
S.~Jackson, B.~Mance, J.~Vandehey, 
On the Borel complexity of continued fraction normal, absolutely abnormal numbers.
\emph{Preprint}, 2021.
(available from \url{https://arxiv.org/abs/2111.11522})

\bibitem{K}
A.~S.~Kechris,
\emph{Classical Descriptive Set Theory}.
Graduate Texts in Mathematics, 156. Springer-Verlag, New York, 1995.

\bibitem{Khin1}
A.~Y.~Khinchin,
Zur metrischen Theorie der diophantischen Approximationen.
\emph{Math.\ Z.} 24 (1926), no.~4, 706--714.

\bibitem{Khin2}
A.~Y.~Khinchin,
Continued fractions. Reprint of the 1964 translation.
Dover Publications, Inc., Mineola, NY, 1997.

\bibitem{Oxtoby} 
J.~C.~Oxtoby, 
\emph{Measure and category}.
Springer-Verlag, 1971.

\bibitem{Roth1}
K.~F.~Roth,
Rational approximations to algebraic numbers.
\emph{Mathematika} 2 (1955), 1--20.

\bibitem{Roth2}
K.~F.~Roth,
Corrigendum to ``Rational
approximations to algebraic numbers''.
\emph{Mathematika} 2 (1955), 168.

\bibitem{Perron}
O.~Perron, \emph{Die Lehre von den Kettenbrüchen}.
Druck and Verlag with B.~G.~Teubner, Leipzig-Berlin, 1913.

\bibitem{Solecki}
S.~Solecki,
Decomposing Borel sets and functions and the structure of Baire class one functions. 
\emph{J.\ Amer.\ Math.\ Soc.} 11 (1998), no.~3, 521--550.

\bibitem{Sondow}
J.~Sondow,
Irrationality measures, irrationality bases,
and a theorem of Jarn\'ik.
\emph{Preprint}, 2004.
(available from \url{https://arxiv.org/abs/math/0406300})

\end{thebibliography}
\end{document}